\newtheorem{theorem}{Theorem}[section]
\newtheorem{corollary}[theorem]{Corollary}
\newtheorem{proposition}[theorem]{Proposition}
\newtheorem{remark}[theorem]{Remark}
\newtheorem{example}[theorem]{Example}
\newtheorem{definition}[theorem]{Definition}
\newcommand{\m}{\frak{m}}
\newcommand{\n}{\frak{n}}
\def\cocoa
\def\dim{{\rm dim}}
\def\depth{{\rm depth}}
\def\Syz{{\rm Syz}}
\def\Gin{{\rm Gin}}
\newcounter{itemitemcounter}
\newcounter{itemcounter}
\title{  \bf   Minimal free resolution  of  a finitely generated module over a  regular local ring
\footnote{  {\it 2000 Mathematics Subject Classification:}  Primary 13H05, Secondary 13D02. \newline {\it{    Key words and Phrases: }} minimal free resolution, filtered module, associated graded module, componentwise linear modules,  generic initial ideal.  
} }
\author{\large    Maria Evelina  Rossi  and  Leila Sharifan  }
\date { }
\begin{document}

\maketitle

\begin{center}{\it Department of Mathematics,
University of Genoa,\\ Via Dodecaneso 35, 16146 Genoa, Italy\\
rossim@dima.unige.it}
\end{center}

\begin{center}{\it Faculty of Mathematics and  Computer Science, Amirkabir University of Technology\\
424, Hafez Ave., P. O. Box 15875-4413, Tehran, Iran   \\
leila-sharifan@aut.ac.ir}
\end{center}

\begin{abstract}   Numerical invariants of a minimal free resolution of a module $M$ over a  regular local  ring $(R,\n)$  can  be studied by taking advantage of  the rich literature on the graded case. The key is to fix suitable $\n$-stable filtrations  ${\mathbb M} $  of $M $ and to compare the  Betti numbers of  $M$  with  those  of the   associated graded module $ gr_{\mathbb M}(M).  $ This  approach has the advantage that  the same module $M$ can be detected by using different filtrations on it.
It  provides  interesting  upper bounds for the Betti numbers and we  study the  modules for which the extremal values are attained.   Among others,  the Koszul modules have this behavior. As a consequence of the main result, we extend   some results by Aramova,  Conca, Herzog and Hibi  on the rigidity of the resolution of standard graded algebras to the local  setting.


\end{abstract}

\maketitle

 \section*{Introduction}

Consider a local ring  $(R, \n)$ and let $M$ be a finitely generated $R$-module.  In the literature, starting from classical results by   Northcott,  Abhyankar,  Matlis and   Sally,    several  authors  detected   basic numerical characters  of   the module $ M $    by means of  the Hilbert Function of $M $  arising from the standard $\n$-adic filtration or,  more in general,  from $\n$-stable  filtrations   (see  \cite{RV}  for an extensive overview).    Deeper  information can   be achieved  from the numerical invariants of a minimal free resolution of $M.$
It is a classical tool  to equip  $M$ with a suitable filtration  and to
get information on  $M$ from the graded free resolution of the corresponding associated graded module.  This approach  has the advantage to benefit from the rich literature concerning the graded cases   to return the   information to   the local ones.  In particular, the main goal of the paper is to  compare the numerical invariants of a local ring $(A, \m)$  with those  of the     associated graded ring with respect to the $\m$-adic filtration on $A,$   that is   the standard graded algebra  $gr_{\m}(A):=\oplus_{t\ge 0}\m^t/\m^{t+1}. $  This is a graded object   which  corresponds to a relevant geometric construction that encodes several  information   on  $A.$  In fact if $A$ is the localization at the origin of the coordinate ring of an affine variety $V$ passing through $0$, then the associated graded ring $gr_{\m}(A)$ is the coordinate ring of the {\it{tangent cone}}  of $V. $ Often, we may assume that $A=R/I $ where $R$ is a regular local ring. In this case  $gr_{\m}(A) \simeq P/I^* $ where $I^*$ is a homogeneous  ideal of a polynomial ring $P$ generated by the initial forms (w.r.t. the $\n$-adic filtration) of the elements of $I.$ In this setting,  the theory of the Gr\"obner bases and  the leading term  ideals   offers interesting  results on the extremal values of the Betti numbers (see for example \cite{AHH}, \cite{CHH}, \cite{C}).

\vskip 2mm

 It is the  reason why we are mainly interested in studying finitely generated modules $M$ over a regular local ring $(R, \n). $  In this case  $gr_{ \n}(R)=\oplus_{i\geq 0} \n^i/\n^{i+1} $  is a polynomial ring, say $P, $ and the associated graded module  with respect to the any $\n$-stable  filtration  of $M$  is  a finitely generated $P$-module.

\vskip 2mm

In general,  the     problem is how to lift the information from  the associated graded module with respect to an $\n$-stable  filtration  to the original
module $M. $  If   the associated graded module with respect to the $\n$-adic filtration $gr_{ \n}(M) =\oplus_{i\ge 0} \n^i
M/\n^{i+1} M $ has a   linear resolution (as a $gr_{ \n}(R)$-module), then  it is easy to see that the Betti
numbers of $M$ and $gr_{ \n}(M) $ coincide.    In this case, the module
$M$ is said to have a {\it{ linear resolution}} in the terminology
of Herzog, Simis and Vasconcelos (see \cite{HSV}) or of Sega (see \cite{Se}),
equivalently $M$ is {\it{ Koszul}} in the terminology of Herzog and
Iyengar (see \cite{HI}). Koszul modules are important  examples  in our investigation. In this paper, we prefer to say that $M$ is Koszul because to have linear resolution is misleading in the graded case.
We recall that a  local ring $(A,\m) $ is said Koszul if  the residue field $k$  is Koszul as an $A$-module, that is the graded $k$-algebra $gr_{\m}(A)= \oplus_{i\ge 0}  \m^i  /\m^{i+1}  $ is Koszul in the classical meaning introduced by Priddy (see \cite{HI}, Remark 1.10). By combining results by  Herzog, Iyengar (see \cite{HI}, Proposition 1.5, Definition 1.7 )  and  R\"omer (see \cite{R}, Theorem 3.2.8 ) or Martinez and Zacharia (see \cite{MZ}, Theorems 2.4, 2.5 and 3.4), we can conclude that  Koszul graded modules and componentwise linear modules coincide. Given these considerations, in this paper  we will speak of Koszul modules in the case of modules over a local ring and of componentwise linear modules or,  indifferently,  of Koszul modules in the graded case. 
\vskip 3mm

Recently,  many papers have been written extending classical results of  the theory of the associated graded ring    with respect to the $\n$-adic filtration   to the more general case of a stable  (or good)  filtration $ \mathbb{M }=\{M_i\}_{i\ge 0}$ on a finitely generated module $M  $  over a local ring $(R,\n).$  The associated graded module  $gr_{\mathbb{M}}(M)=\oplus_i  M_i/M_{i+1} $   has a natural structure as a  finitely generated  $G$-module where $G=gr_{\n}(R)=\oplus_{i\geq 0}
\n^i/\n^{i+1}. $       We are interested to
compare   the  free resolution  of $M$ as an $R$-module and the free
resolution of $gr_{ \mathbb{M}}(M) $  as  a $ G$-module.

An important  starting point of our investigation is a result due to Robbiano (see \cite{Rob} and also   \cite{HRV}, \cite{Se}) which says that from a minimal $G$-free resolution of  $gr_{ \mathbb{M}}(M) $ we can build up an  $R$-free resolution on $M$ which is  not  necessarily  minimal. Hence, for the Betti numbers of $M$ and  $ gr_{ \mathbb{M}}(M) $ one has
$$  \beta_i(M)  \le \beta_i(gr_{ \mathbb{M}}(M) )  $$
for every $i \ge 0.$  In particular, the free resolution of $M $  is minimal if and only if $\beta_i(gr_{ \mathbb{M}}(M) ) = \beta_i(M) $ for every $i\ge 0.$

According   to   \cite{HRV},  a  finitely generated module  $M $  is called of   {\it{ homogeneous type}}  with respect to an  $\n$-stable   filtration $\mathbb{M}  , $    if $  \beta_i(M) = \beta_i(gr_{ \mathbb{M}}(M) )  $  for every $i \ge 0. $ If $M$ is of homogeneous type,  then in particular depth $M$ = depth  $gr_{ \mathbb{M}}(M). $ Notice that a  Koszul module is a module of homogeneous type with respect to the $\n$-adic filtration,  conversely there are modules of homogeneous type which are not Koszul.

Under the assumption that $R$ is a regular local ring, the main   result of this  paper  (Theorem \ref{M}) says   that if the minimal number of generators of $M$ (as an $R$-module) and the minimal number of generators of $ gr_{ \mathbb{M}}(M)$ (as  a $G$-module) coincide,   then $M$ is of homogeneous type with respect to $ \mathbb{M},$   provided that  $ gr_{ \mathbb{M}}(M)$ is a componentwise linear module or equivalently $ gr_{ \mathbb{M}}(M)$ is  a Koszul module. Moreover,   Theorem \ref{lpd} says  that  if $ gr_{ \mathbb{M}}(M)$ is Koszul for some $\n$-stable filtration $\mathbb{M}, $ then $M$ is Koszul,   provided the minimal number of generators of $M$  and  $ gr_{ \mathbb{M}}(M)$  coincide. It is an interesting question to ask whether the converse holds.
\vskip 2mm

The first application deals  with the classical   case  of the associated graded ring of a local ring  $A=R/I $ ($(R, \n) $  is a regular local ring).   In this case, we will   apply  Theorem \ref{M}   in the case $ M=I $
   equipped with the $\n$-stable filtration $ \mathbb {M}=\{ I \cap \n^i  \}.  $  The point is that $I^* =gr_{\mathbb {M}}(I). $

 As a consequence    we prove   that if $I$ is minimally generated by an  $\n$-standard base and $I^* $ (the ideal generated by the initial forms of the elements of $I$) is componentwise linear, then   the numerical invariants  of a minimal free resolution of $A$ and those of $gr_{\m}(A) $ coincide (see Corollary \ref{I*}).   In particular, under these assumptions,  depth $A=$ depth $gr_{\m}(A).$ We point out that,  if this is the case, $I$ is a Koszul module by Theorem \ref{lpd}.   

It is worth  remarking that we can not delete the condition which $I^*$ is componentwise linear, in fact in general $ A $ and $gr_{\m}(A) $ don't have the same Betti numbers    even if the minimal number of generators of $I$ and $ I^* $ coincide, that is $I$ is minimally generated by an  $\n$-standard base of $I. $ For example this is the case if we consider  the defining ideal $I$  of the semigroup ring $ A=k[[t^{19}, t^{26}, t^{34}, t^{40}]]  $  (see \cite[Example, Section 3]{HRV}. Notice that   both $A$ and $ gr_{\m}(A)  $   are Cohen-Macaulay.

On the analogy with the graded case and by taking advantage of  it, we can rephrase the result in terms of suitable monomial ideals attached to the ideal $I. $ If we define    $\Gin(I) := \Gin (I^*)   $ the generic initial ideal of $I^*$ w.r.t. the reverlex order, then we may deduce from   the homogeneous  case   (see \cite{AHH, CHH, C}),       $$\beta_i(I) \le \beta_i(Gin(I)) $$ for every $i\ge 0 $
and,  as a consequence of Theorem \ref{M},     the   equality holds   if and only if $\beta_0(I) = \beta_0(Gin(I)) $ or equivalently $I$ is minimally generated by an $\n$-standard base and $I^*$ is componentwise linear (see Corollary \ref{I*}).  A similar result can be also presented in terms of $Lex(I) $ which is the unique lex segment ideal in $P$ such that $R/I$ and $P/Lex(I)$ have  the same Hilbert function. Taking advantage from  the graded case, Corollary \ref{LEX} is an  extension to the local case of a result by  Herzog and  Hibi.

 In \cite{CHH},    Conca,   Herzog and   Hibi  proved   an upper bound for the Betti numbers  of  the local ring $A $ in terms of the so-called generic annihilators of $A;  $ Corollary \ref{alfa} shows  that, under the assumption of the main result,  the extremal Betti numbers are achieved.

Theorem  \ref{lpd}  has another interesting consequence.
Assume    $ I \subseteq \n^2$ is a non  zero  ideal of $R.   $   It is known that the  Symmetric algebra   of the maximal
 ideal $\m$ of $A=R/I $ is $ \ S_A(\m) \simeq \oplus_{i \ge 0} \n^i/I \n^{i-1}.  $ If we consider the  $\n$-adic
  filtration on the ideal $I, $ then the associated graded module $gr_{\n}(I)= \oplus_{i \ge 0}  I \n^i/I \n^{i+1} $
   with respect to the filtration $\mathbb{M}=\{ I \n^i  \} $ sits inside  $S_A(\m) $ as a graded submodule, via the canonical embedding
 $gr_{\n}(I) (-2) \to   S_A(\m).  $   Herzog, Rossi and Valla in \cite[Theorem 2.13]{HRV} showed   that we can deduce the homological properties of the Symmetric algebra by  taking  advantage from this
  comparison.  By using this result and the fact that the ideals under investigation are Koszul,  Theorem 3.8 extends a recent result by Herzog, Restuccia and Rinaldo \cite[Theorem 3.9] {HRR}  on the depth of the Symmetric algebra.    \\

{\bf{ Acknowledgment}}: This work was done while the second author was visiting University of Genoa, Italy.  She is thankful to the university for the hospitality.
She also thanks the Ministry of science, research and technology of Iran for the financial support.

The authors wish to thank A. Conca and T. R\"omer for many stimulating discussions in connection with this paper. Special thanks to the Reviewer for such a careful reading of the paper
and  the useful suggestions.

\section{ Preliminaries on filtered modules}

Throughout the paper $(R, \n)$ is a  local ring
and $M$ is  a finitely generated $R$-module.  We say, according to the notation in \cite{RV}, that a filtration of submodules $\mathbb{M} = \{ M_n\}_{n\ge 0}  $ on $M$ is   an  $\n$-filtration if  $\n M_n \subseteq M_{n+1} $ for every $n \ge 0, $ and a   stable (or good) $\n$-filtration if $\n M_n =  M_{n+1} $ for all sufficiently large $n.$  In the following a {\it{ filtered module}} $M$ will be always an $R$-module equipped with a stable  $\n$-filtration $\mathbb{M}.$

If    $\mathbb{M}=\{M_j\}$ is an  $\n$-filtration of $M$, define
$$gr_{\mathbb{M}}(M)=\bigoplus_{j\ge 0}(M_j/M_{j+1})$$ which is a graded
$gr_{\n}(R)$-module in a natural way. It is called the {\bf associated graded module} to the filtration $\mathbb{M}.$

To avoid triviality, we assume that $gr_{\mathbb{M}}(M)$ is not zero or equivalently $M \not = 0.$
 If $N$ is a submodule of $M,$ by Artin-Rees Lemma, the sequence $\{N\cap M_j\ | \ j\ge 0\}$ is a good $\n$-filtration of $N$.  Since  \begin{equation}\label{N}  (N\cap M_j)/N\cap M_{j+1})\simeq (N\cap M_j+M_{j+1})/M_{j+1} \end{equation}   $gr_{\mathbb{M}}(N)$ is a graded submodule of $gr_{\mathbb{M}}(M) $ denoted by $N^*.$

 If $m \in M\setminus\{0\}, $ we denote by $\nu_{\mathbb{M}}(m) $ the largest integer $p$ such that  $ m \in M_p $ (the so-called valuation of $m$ with respect to $\mathbb{M}) $ and we denote by $m^* $ or $gr_{\mathbb{M}}(m) $ the residue class of $m$ in $M_p/M_{p+1} $ where $p= \nu_{\mathbb{M}}(m).  $ If $m=0, $ we set   $\nu_{\mathbb{M}}(m)= + \infty. $

   Using (\ref{N}), it is clear that  $gr_{\mathbb{M}}(N)$ is generated by the elements $x^* $ with $x \in N,$ we write $$ gr_{\mathbb{M}}(N) =< x^* \   : \ x \in N>.$$
   On the other hand  it is clear that $\{(N+M_j)/N \ | \ j\ge 0\}$ is a good $\n$-filtration of $M/N$ which we denote  by $\mathbb{M}/N.$ These graded modules are related by the graded isomorphism $$gr_{\mathbb{M}/N}(M/N)\simeq gr_{\mathbb{M}}(M)/gr_{\mathbb{M}}(N).$$
   \vskip 3mm
   For completeness we collect, in this section, a part of the well known  results  concerning the homomorphisms of filtered modules.
   \vskip 2mm
   \begin{definition} If $M$ and $N$ are filtered $R$-modules and $f : M \to N $ is an $R$-homomorphism, $f$ is said to be a {\it{homomorphism of filtered modules}}  if $ f(M_p) \subseteq N_p $ for every $p \ge 0 $ and $f$ is said {\it { strict }} if $f(M_p) = f(M) \cap N_p $  for every $p \ge 0. $
   \end{definition}
The morphism of filtered modules $f : M \to N $ clearly induces a
morphism of graded $gr_{\n}(R)$-modules $$gr(f) : gr_{\mathbb{M}}(M)
\to  gr_{\mathbb{N}}(N).$$ It is clear that $ gr ( \dot ) $ is a
functor from the category of the filtered $R$-modules into the
category of the graded $gr_{\n}(R)$-modules.  Furthermore we have a
canonical embedding $ (Ker f)^* \to Ker( gr(f)).$

  \vskip 3mm
 \begin{proposition} \label{RoV}{\rm{(see \cite{RoV})}} Let $ {\bf{F}} :    M \overset{g}  {\to} N \overset{f}  {\to} Q  $  be a complex of filtered modules and $$ gr({\bf{F}}) : gr_{\mathbb{M}}(M) \overset{gr(g)}{\to} gr_{\mathbb{N}}(N)
 \overset{gr(f)}{\to} gr_{\mathbb{Q}}(Q) $$ be the induced complex of graded $gr_{\n}(R)$-modules. Then $ gr({\bf{F}}) $ is exact if and only if $ {\bf{F}} $ is exact and $f$ and $g$ are strict morphisms.

 \end{proposition}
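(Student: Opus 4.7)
For the easy direction $(\Leftarrow)$, assume $\mathbf{F}$ is exact and $f, g$ are strict. The key observation is the chain of graded submodules of $gr_{\mathbb{N}}(N)$
\[
\mathrm{im}\bigl(gr(g)\bigr) \;\subseteq\; \bigl(g(M)\bigr)^{*} \;\subseteq\; (\ker f)^{*} \;\subseteq\; \ker\bigl(gr(f)\bigr),
\]
whose outer inclusions are tautological and whose middle inclusion expresses that $\mathbf{F}$ is a complex. A degree-wise check shows that strictness of $g$ forces equality of the first pair (since $g(M_p) = g(M)\cap N_p$ identifies the two degree-$p$ pieces), strictness of $f$ forces equality of the last pair (from $f(N_p)\cap Q_{p+1} = f(N_{p+1})$ one writes $x \in N_p$ with $f(x) \in Q_{p+1}$ as $x = (x-y) + y$ with $y \in N_{p+1}$ and $x-y \in \ker f$), and exactness of $\mathbf{F}$ gives the middle equality $(g(M))^{*} = (\ker f)^{*}$ directly. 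All four terms coincide, so $gr(\mathbf{F})$ is exact.

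For the harder direction $(\Rightarrow)$, assume $gr(\mathbf{F})$ is exact. I would first establish strictness of $f$ by a valuation argument: given $y = f(z) \in f(N) \cap Q_p$ with $s := \nu_{\mathbb{N}}(z) < p$, one has $f(z) \in Q_{s+1}$ and hence $gr(f)(z + N_{s+1}) = 0$, so exactness of $gr(\mathbf{F})$ at degree $s$ produces $w \in M_s$ with $z - g(w) \in N_{s+1}$ and $f(z - g(w)) = y$; iterating at most $p - s$ times yields a preimage in $N_p$. Once $f$ is strict, $(\ker f)^{*} = \ker(gr(f)) = \mathrm{im}(gr(g))$, and combined with the chain above this forces the middle inclusions to be equalities, which translates to the filtered condition $\ker f \cap N_p \subseteq g(M_p) + N_{p+1}$ for every $p$. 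A parallel descent applied inside $\ker f$ then yields both $\ker f = \mathrm{im}(g)$ (exactness of $\mathbf{F}$ at $N$) and $g(M_p) = g(M)\cap N_p$ (strictness of $g$).

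The main technical obstacle is ensuring termination of this second descent: starting from $z \in \ker f$ one produces partial preimages $X_k = x_1 + \cdots + x_k$ with $x_i \in M_{s+i-1}$ and $z - g(X_k) \in N_{s+k}$, and must then conclude that a genuine preimage of $z$ exists in $M$. Here the hypothesis that the filtrations are stable on finitely generated modules over the Noetherian local ring $R$ is essential: Krull's intersection theorem together with Artin--Rees-type considerations (as developed in \cite{RoV}) guarantee that $z \in \mathrm{im}(g)$. Apart from this step, the rest of the argument is a routine unwinding of the definitions of strictness and of $gr(\cdot)$.
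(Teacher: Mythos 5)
The paper does not actually prove this proposition; it is stated with a citation to \cite{RoV} and used as a black box. Your reconstruction is correct and is essentially the standard argument one finds in the literature on filtered modules. In particular, the organizing observation
\[
\mathrm{im}\bigl(gr(g)\bigr)\;\subseteq\;\bigl(g(M)\bigr)^{*}\;\subseteq\;(\ker f)^{*}\;\subseteq\;\ker\bigl(gr(f)\bigr),
\]
with strictness of $g$, exactness of $\mathbf F$, and strictness of $f$ respectively controlling the three inclusions, is the right framework, and your degree-by-degree verifications of each equivalence are accurate. For the converse, the two descents are also correct, and you are right to single out the termination of the second one as the only non-formal step: from $z - g(x_1+\cdots+x_k)\in N_{s+k}$ with all $x_i\in M_s$ one gets $z\in\bigcap_k\bigl(g(M_s)+N_{s+k}\bigr)$, and since the filtration $\{N_k\}$ is $\n$-stable this intersection collapses to $g(M_s)$ by Krull's intersection theorem applied to $N/g(M_s)$. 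That simultaneously delivers $\ker f\subseteq g(M)$ and, taking $s=\nu_{\mathbb N}(z)\ge p$, the strictness $g(M)\cap N_p\subseteq g(M_p)$. So there is no gap here beyond the one you have already flagged and correctly attributed to finite generation over the Noetherian local ring.
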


 As a consequence of the above result we get that a morphism $f : M \to N $ of filtered modules is strict if and only if the canonical embedding $ (Ker f)^* \to Ker (gr(f)) $ is an isomorphism.


 \begin{definition} {\label{L}} Let $L= \oplus_{i=1}^s R e_i $ be a free $R$-module of rank $s $ and $\nu_1, \dots, \nu_s $ be integers. We define the filtration $\mathbb{L} = \{L_p : p \in {\bf{Z}} \} $ on $L$ as follows
 $$L_p := \oplus_{i=1}^s \n^{p- \nu_i} e_i = \{(a_1, \dots , a_s) : a_i \in \n^{p- \nu_i} \}.$$
 We denote the filtered free $R$-module $L$ by $ \oplus_{i=1}^s R(-\nu_i) $ and we call it  {\it{ special filtration}} on $L.$
 \end{definition}

 So when we write $L=  \oplus_{i=1}^s R(-\nu_i) $ it means that we consider the free module $L$ of rank $s $ with the special  filtration  defined above. It is clear that $\mathbb{L} $ is an $\n$-stable filtration. A filtered free $R$-module $L=  \oplus_{i=1}^s R(-\nu_i) $  is a free module with canonical basis $(e_1, \dots, e_s)$ such that $\nu_{\mathbb{L}}(e_i)=  \nu_i.$ It is obvious that $gr_{\mathbb{L}}(L) = \oplus_{p}  L_p/L_{p+1} $ is isomorphic as  a $gr_{\n}(R)$-module to $\oplus_{i=1}^s gr_{\n}(R) (-\nu_i) = \oplus_{i=1}^s G(-\nu_i) $  where  for short $G=gr_{\n}(R).  $

The canonical basis $(gr_{\mathbb{L}}(e_1), \dots, gr_{\mathbb{L}}(e_s)) $ of $gr_{\mathbb{L}}(L) $ will be simply denoted by  $(e_1, \dots, e_s).$ Note that $R$ with the $\n$-adic filtration is the filtered module $R(0).$
 \vskip 2mm  If  $({\bf{F.}}, \delta .) $ is a complex of finitely generated free $R$-modules, a special filtration on  {\bf{F.}}  is a special filtration on each $F_i $ that makes $({\bf{F.}}, \delta .) $ a filtered complex (complex of filtered modules).  Our goal is to consider  special  filtrations on  an  $R$-free resolution of a filtered module $M. $
 We recall that over local rings, each finitely generated module has a minimal free resolution, and this is unique (up to isomorphism). Thus, one may speak of   the  minimal free resolution of such a module.
 We introduce now the main objects of interest.

  \vskip 2mm
 Let $M$ be a  finitely generated filtered $R$-module   and   $S=\{f_1, \dots, f_s\} $ be a system of elements  of $M $ and let  $ \nu_{\mathbb{M}}(f_i) $  be the corresponding valuations.  As in Definition \ref{L}, let $L =  \oplus_{i=1}^s R e_i $ be a free $R$-module of rank $s$ equipped with the filtration  $\mathbb{L}    $ where  $\nu_i=  \nu_{\mathbb{M}}(f_i).$   Then  we denote the filtered free $R$-module $L $ by $ \oplus_{i=1}^s R(-  \nu_{\mathbb{M}}(f_i) ), $ hence $\nu_{\mathbb{L} }(e_i)=  \nu_{\mathbb{M}}(f_i).$

 Let $\phi  : L  \to M $ be a morphism of filtered $R$-modules   defined by $$\phi  (e_i)= f_i. $$ It is clear that $\phi  $ is a morphism of filtered modules and  $gr_{\mathbb{L} }(L ) $ is isomorphic to the  graded free $G$-module $\oplus_{i=1}^s G(- \nu_{\mathbb{M}}(f_i)) $ with a basis $(e_1,\dots,e_s) $ where deg$(e_i)=  \nu_{\mathbb{M}}(f_i).$ In particular $\phi $ induces a natural graded morphism (of degree zero) $gr(\phi ) : gr_{\mathbb{L} }(L )  \to gr_{\mathbb{M} }(M) $ sending $e_i$ to $gr_{\mathbb{M}}(f_i)=f_i^*.$

 \vskip 2mm
 Let $ c= ^t (c_1, \dots, c_s) $ be an element of $L .$ By the definition of the filtration $\mathbb{L}  $ on $L, $ we have $$\nu_{\mathbb{L} }(c) = min \{ \nu_R(c_i) + \nu_{\mathbb{M }}(f_i) \ : \ 1\le i \le s \} \ \le \ \nu_{\mathbb{M}}(\phi (c)).$$
Set $gr_{\mathbb{L} }(c) = ^t (c_1', \dots, c_s')  $  and $\nu =  \nu_{\mathbb{L} }(c), $ then
 \vskip 2mm
  $c_i'=\Big\{\begin{array}{ll}
   gr_{\n}(c_i)   & \textrm{if $  \  \nu_R(c_i) + \nu_{\mathbb{M }}(f_i) = \nu $}\\
       0 & \textrm{if $    \  \nu_R(c_i) + \nu_{\mathbb{M }}(f_i) > \nu $}\end{array}$\\


 \vskip 2mm

 If we denote by $Syz (S) $ the submodule of $L$ generated by the first syzygies of $f_1, \dots, f_s, $ then
 $\Syz(S) =$Ker$ \phi. $ Likewise let  $\Syz (gr_{\mathbb{M}}(S))$ be the module generated by the first syzygies of $gr_{\mathbb{M}}(f_1), \dots, gr_{\mathbb{M}}(f_s), $ then $\Syz (gr_{\mathbb{M}}(S)) = Ker( gr (\phi)).$
\vskip 3mm
\noindent  Then we have  the following fundamental diagram:
 \vskip 2mm
$\begin{CD}0 \hspace{0.6 cm}@>>> \hspace{0. cm} \Syz(S)\hspace{0.5 cm}@> >> \oplus_{i=1}^s R(-\nu_{\mathbb{M}}(f_i)) @>{\phi}>>M    \\ \end{CD}$\\

$\hspace{5.7 cm} \downarrow gr_{\mathbb{L}} \hspace{3.0  cm} \downarrow gr_{\mathbb{M}} $\\

$\begin{CD}0 @>>>  \Syz(gr_{\mathbb{M}}(S) )@>{ }>> \oplus_{i=1}^s G (-\nu_{\mathbb{M}}(f_i))  @>{gr(\phi ) }>>  gr_{\mathbb{M}}(M)  \\ \end{CD}$\\
\newline
  \vskip 2mm

\begin{definition}\label{lift}  Let $M$ be a filtered module. An element $g \in M$ is a \emph{lifting} \rm of an element $h \in gr_{\mathbb{M}}(M) $
if  $$gr_{\mathbb{M}}(g)=h.$$
\end{definition}

\vskip 3mm
\begin{remark}\label{LFsyz} 
\rm If  $p\in \Syz(S)$, then we have $gr_{\mathbb{L}}(p) \in \Syz(gr_{\mathbb{M}}(S))$; in particular the map $gr_{\mathbb{L}} $ induces a map
$$gr_{\mathbb{L}}  {|}: \Syz(S) \longrightarrow  \Syz(gr_{\mathbb{M}}(S)).$$
\end{remark}

 \vskip 2mm
  Following the setting in \cite[Section 2]{Sh} and \cite{KR}, we introduce the concept of standard bases of a module.

 \begin{definition} Let $M$ be a filtered $R$-module. A subset $S=\{f_1,\dots, f_s\} $ of $M$ is called a {\it{standard basis}} of $M$ if $$gr_{\mathbb{M}}(M) = <f_1^*, \dots, f_s^*>.$$ If any proper subset of $S$ is not a standard basis, we call $S$ a {\it{minimal standard basis}}.
 \end{definition}

Let  $ f: M \to N $ be   a morphism of filtered $R$-modules and
$gr(f) :  gr_{\mathbb{M}}(M) \to  gr_{\mathbb{N}}(N) $ the induced
homomorphism. If $gr (f) $ is surjective, then $f$ is a strict
surjective homomorphism.  By using this fact and Proposition
\ref{RoV}, we can prove next theorem which gives a criteria for
standard bases.   In the case of ideals, the result had been  proved
in \cite{RoV}. We omit here the proof because it is essentially the
same as  Theorem 2.9 in \cite{Sh}.

\begin{theorem}\label{esiste}Let  $M$ be a filtered $R$-module, $f_1,\dots,f_s \in M$ and $S=\{ f_1,\dots,f_s\} $.
The following facts are equivalent:
\begin{enumerate}\item  $\{f_1,\dots,f_s\}$ is  a   standard basis of $M.$
\item $\{f_1,\dots,f_s\}$ generates $M $ and every element of $ \Syz(gr_{\mathbb{M}}(S)) $ can be lifted to an element  in $Syz(S).$
\item $\{f_1,\dots,f_s\}$ generates $M $ and $  \Syz(gr_{\mathbb{M}}(S))= gr_{\mathbb{L}}(\Syz(S)).$
 \end{enumerate}
\end{theorem}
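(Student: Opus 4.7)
The plan is to establish $(1) \Rightarrow (3) \Rightarrow (2) \Rightarrow (1)$, where the middle implication is immediate. The argument pivots on the map $gr(\phi) \colon gr_{\mathbb{L}}(L) \to gr_{\mathbb{M}}(M)$: by construction $gr(\phi)(e_i) = f_i^*$, so condition (1) is exactly the surjectivity of $gr(\phi)$.

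For $(1) \Rightarrow (3)$ I invoke the remark recorded just before the theorem, namely that if $gr(\phi)$ is surjective then $\phi$ is surjective and strict. In particular the $f_i$ generate $M$, so the sequence $0 \to \Syz(S) \to L \overset{\phi}{\to} M \to 0$ is exact; it is moreover a sequence of strict morphisms (the inclusion of $\Syz(S)$, equipped with the filtration induced from $\mathbb{L}$, is strict by construction). Proposition \ref{RoV} then gives the exact sequence
$$0 \to gr_{\mathbb{L}}(\Syz(S)) \to gr_{\mathbb{L}}(L) \overset{gr(\phi)}{\to} gr_{\mathbb{M}}(M) \to 0,$$
which identifies $gr_{\mathbb{L}}(\Syz(S))$ with $\ker(gr(\phi)) = \Syz(gr_{\mathbb{M}}(S))$. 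The implication $(3) \Rightarrow (2)$ is then immediate, since an equality of modules trivially supplies a lifting for each element.

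The main content lies in $(2) \Rightarrow (1)$, i.e., showing that $gr(\phi)$ is surjective. Given $m \in M$ with $\nu_{\mathbb{M}}(m) = p$, I aim to produce $c \in L$ with $\phi(c) = m$ and $\nu_{\mathbb{L}}(c) = p$; once this is done, $m^* = gr(\phi)(gr_{\mathbb{L}}(c))$ is a $gr_{\n}(R)$-combination of the $f_i^*$. I start from any $c \in L$ with $\phi(c) = m$ (which exists because the $f_i$ generate $M$) and set $q = \nu_{\mathbb{L}}(c) \le p$. If $q < p$, then $m = \phi(c) \in M_p \subseteq M_{q+1}$, so $gr_{\mathbb{L}}(c)$ lies in $\ker(gr(\phi)) = \Syz(gr_{\mathbb{M}}(S))$; by hypothesis (2) I lift this element to $c' \in \Syz(S)$ with $gr_{\mathbb{L}}(c') = gr_{\mathbb{L}}(c)$ and replace $c$ by $c - c'$, which still satisfies $\phi(c - c') = m$ but now has strictly larger valuation.

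The main obstacle to watch for is that this iteration might overshoot $p$ or fail to terminate, but the valuation can never exceed $p$: any $c \in L_{p+1}$ would force $m = \phi(c) \in M_{p+1}$, contradicting $\nu_{\mathbb{M}}(m) = p$. Hence after at most $p - q$ steps one lands at valuation exactly $p$, which completes the argument.
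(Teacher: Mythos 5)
Your proof is correct, including the iterative valuation-climbing argument for $(2)\Rightarrow(1)$. The paper omits its own proof (referring to Theorem 2.9 of \cite{Sh}) and says only that it follows from Proposition \ref{RoV} together with the observation, recorded just before the theorem, that surjectivity of $gr(\phi)$ forces $\phi$ to be strict and surjective; your $(1)\Rightarrow(3)$ runs exactly along those lines. Where you diverge is $(2)\Rightarrow(1)$: you argue directly, repeatedly subtracting a lifted syzygy from a preimage $c$ of $m$ to push up $\nu_{\mathbb{L}}(c)$, with termination secured by the observation that $\phi(L_{p+1})\subseteq M_{p+1}$ forbids any preimage of $m$ from lying in $L_{p+1}$. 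The intended shortcut, suggested by the equivalence ``$f$ strict $\Leftrightarrow$ $(\ker f)^*\hookrightarrow\ker(gr(f))$ is an isomorphism'' stated right after Proposition \ref{RoV}, is to read condition (2) as saying this embedding is onto, conclude that $\phi$ is strict and surjective, and then apply Proposition \ref{RoV} to the sequence $L\to M\to 0$ to get $gr(\phi)$ surjective. Your route is more elementary and self-contained, avoiding that biconditional entirely and making explicit why the valuation climbs; the abstract route is shorter but leans on machinery whose proof you would otherwise have to unwind. Both are valid, and in effect your iteration is the unwinding.
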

 \vskip 2mm

 Note that a similar result is well known  for  Gr\"obner bases   (see for example Theorem 2.4.1 D1) and B2), \cite{KR}).
\vskip 2mm
 With the notation of the previous fundamental diagram, the equivalent conditions of Theorem \ref{esiste} are also equivalent to the following:
\vskip 2mm
  {\it{ 4.  $gr(\phi) $ is surjective}}
\vskip 2mm
  {\it { 5.  $\{f_1,\dots,f_s\}$ generates $M $ and $\phi $ is strict.}}
\vskip 3mm

In this setting it comes natural  the following result  presented  in  \cite{Rob} and also in \cite[ Theorem 3.1]{HRV},  which gives a comparison between an  $R$-free resolution of $M$ and a $G$-free resolution of  $ gr_{\mathbb{M}}(M). $ The result  will be a  central tool in our investigation and  we present here a proof in terms of standard bases because  this constructive approach will be fundamental  in the following.

\begin{theorem} \label{main}  Let $M$ be a filtered $R$-module and $({\bf{G.}}, d. )$ a  $G$-free graded   resolution of
$ gr_{\mathbb{M}}(M). $ Then we can build up an  $R$-free resolution  $({\bf{F.}, \delta.})$ of $M$ and a special filtration $\mathbb{F} $ on it such that $gr_{\mathbb{F}}({\bf{F.}}) = {\bf{G.}}.$
\end{theorem}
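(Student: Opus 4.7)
The plan is to construct $({\bf{F.}}, \delta.)$ inductively, producing at each stage a strict morphism of filtered modules whose associated graded recovers the corresponding differential $d_i$ of ${\bf{G.}}$. The key tools are Theorem \ref{esiste}, which characterizes standard bases by surjectivity on associated graded modules and automatically yields strictness, together with Proposition \ref{RoV} (and the remark following it), which identifies $(Ker f)^*$ with $Ker(gr(f))$ exactly when $f$ is strict.

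For the base case, write $G_0 = \oplus_j G(-a_{0j})$ with canonical basis $\{e_j\}$ and set $h_j = d_0(e_j) \in gr_{\mathbb{M}}(M)$, homogeneous of degree $a_{0j}$. Since $d_0$ is surjective, the $h_j$ generate $gr_{\mathbb{M}}(M)$, so by Definition \ref{lift} each admits a lifting $f_j \in M$ with $\nu_{\mathbb{M}}(f_j)=a_{0j}$. Put $F_0 = \oplus_j R(-a_{0j})$ with its special filtration and $\delta_0(e_j) = f_j$. Then $gr(\delta_0) = d_0$ is surjective, so the equivalence between conditions 1 and 4 of Theorem \ref{esiste} shows that $\{f_j\}$ is a standard basis of $M$, and condition 5 yields that $\delta_0$ is a strict surjection.

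For the inductive step, suppose $F_0,\dots,F_{i-1}$ have been built together with strict filtered morphisms $\delta_0,\dots,\delta_{i-1}$ forming an exact sequence onto $M$ with $gr(\delta_k)=d_k$ for every $k\le i-1$. Strictness of $\delta_{i-1}$ and Proposition \ref{RoV} give
$$(Ker\,\delta_{i-1})^* \,=\, Ker(gr(\delta_{i-1})) \,=\, Ker\,d_{i-1} \,=\, Im\,d_i.$$
Writing $G_i=\oplus_j G(-a_{ij})$ with basis $\{e_j\}$ and $h_j=d_i(e_j)$, the $h_j$'s generate $Im\,d_i=(Ker\,\delta_{i-1})^*$, so I can lift each to some $p_j \in Ker\,\delta_{i-1}\subseteq F_{i-1}$ with $\nu_{\mathbb{F}_{i-1}}(p_j)=a_{ij}$. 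Setting $F_i=\oplus_j R(-a_{ij})$ with its special filtration and $\delta_i(e_j)=p_j$ makes $\delta_{i-1}\circ\delta_i = 0$ and $gr(\delta_i)=d_i$ by construction. A second application of Theorem \ref{esiste}, now inside the filtered module $Ker\,\delta_{i-1}$, shows that $\{p_j\}$ is a standard basis of $Ker\,\delta_{i-1}$, since the $h_j=gr(p_j)$ generate its associated graded. Consequently $Im\,\delta_i = Ker\,\delta_{i-1}$ (exactness at $F_{i-1}$) and $\delta_i$ is strict, closing the induction. By construction one has $gr_{\mathbb{F}}({\bf{F.}}) = {\bf{G.}}$.

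The main obstacle, and what makes the induction delicate, is propagating strictness from $\delta_{i-1}$ to $\delta_i$. Without strictness of $\delta_{i-1}$ one would only have the inclusion $(Ker\,\delta_{i-1})^* \subseteq Ker\,d_{i-1}$, so the generators of $Im\,d_i$ might fail to lift to elements of $Ker\,\delta_{i-1}$ of the prescribed valuation, and the degree-zero equality $gr(\delta_i)=d_i$ would break down. The combined use of Theorem \ref{esiste} and Proposition \ref{RoV} is precisely what propagates strictness step by step, and this is the crux of the argument.
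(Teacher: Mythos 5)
Your proof is correct and follows essentially the same inductive construction as the paper's: lift generators of $\operatorname{Im} d_i$ to the kernel of the previous differential, define the next free module with the special filtration given by the valuations of those lifts, and invoke Theorem \ref{esiste} to propagate the invariant that the kernel of the graded map equals the associated graded of the kernel (equivalently, strictness). The paper phrases the inductive hypothesis as $Ker(d_j) = gr_{\mathbb{F}_j}(Ker(\delta_j))$ rather than in the language of strict morphisms, but by the remark following Proposition \ref{RoV} these are the same condition, so the two arguments are identical in substance.
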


\begin{proof}    Let $$ {\bf{G.}}: \  \dots \to \oplus_{i=1}^{\beta_{l}} G(-a_{l i})  \overset{d_{l}} \to   \oplus_{i=1}^{\beta_{l-1}} G(-a_{l-1 i}) \overset{d_{l-1 }}  \to \dots  \overset{d_1} \to   \oplus_{i=1}^{\beta_{0}} G(-a_{0 i})  \overset{d_{0 }} \to gr_{\mathbb{M}}(M) \to  0$$ be a  $G$-free resolution (not  necessarily minimal) of $ gr_{\mathbb{M}}(M). $ We define  now $({\bf{F.}, \delta.})$ by inductive process  focused on Theorem \ref{esiste}.

We put $g_i=d_0(e_{0i}) \in  gr_{\mathbb{M}}(M)  $ and let $f_i \in M $ be a lifting of $g_i. $ Starting from the integers  $ a_{0i} = \nu_{\mathbb{M}}(f_i), $ define $F_0$ the $R$-free module of rank $\beta_0 $ with the special filtration    $$F_0= \oplus_{i=1}^{\beta_0}  R(-a_{0 i})   {\text{ and }}  \delta_0: F_0 \to M $$  such that $\delta_0(e_{0i})=f_i.$ Since $d_0 $ is surjective, then the $f_i$'s generate a standard base of $M$ hence,  by Theorem \ref{esiste}, $Ker(d_0) = gr_{\mathbb{F}_0}(Ker (\delta_0))  $   and $ F_0 \overset{\delta_0}\to M \to 0 $ is exact.

Suppose that we have defined filtered free modules $F_0, \dots, F_j$ with $0 \le j <l $ such that
$$ F_j \overset{\delta_j} \to F_{j-1} \dots \to F_0 \overset{\delta_0} \to M \to 0 $$ is a part of an  $R$-free resolution of $M. $ In particular,  for every $i \le j, $   $gr_{\mathbb{F}_i}(F_i)  = G_i , $ $gr_{\mathbb{F}_i}(\delta_i)  = d_i $ and moreover
\begin{equation} \label{step} Ker (d_j)=gr_{\mathbb{F}_j}(Ker ( \delta_j) ). \end{equation}
Let $Ker (d_j) = <g_{j 1}, \dots, g_{j \beta_{j+1}}>,$  then  there exist $f_{j i} \in Ker (\delta_j)  $ which are  lifting of $g_{ji}.$ So $ g_{j i}=  gr_{\mathbb{F}_j}(f_{j i})  $ and $ a_{j+1 i} = \nu_{\mathbb{F}_j} (f_{j i}).$ Define now the filtered free $R$-module
$$ F_{j+1} = \oplus_{i=1}^{\beta_{j+1}} R(-a_{j+1 i}) \text{ and } \delta_{j+1}: F_{j+1} \to F_j $$ such that $\delta_{j+1}(e_{ j+1 i} ) = f_{ ji}. $ Then $$ F_{j+1} \overset{\delta_{j+1}} \to F_j \overset{\delta_{j}} \to F_{j-1} $$ is a complex such that $gr_{\mathbb{F}_{j+1}}(\delta_{j+1}  )= d_{j+1}. $ Because of (\ref{step}), $ f_{j 1}, \dots, f_{j \beta_{j+1}}$ is a standard basis of $Ker(\delta_j) $ as a submodule of the filtered module $F_j $. So again by Theorem \ref{esiste}, we get $Ker (d_{j+1})=gr_{\mathbb{F}_{j+1}}(Ker ( \delta_{j+1}) ) $ and we can continue by inductive process.

\end{proof}

 It is worth  remarking that if we start from  a minimal free resolution of $gr_{\mathbb{M}}(M), $ then  the $R$-free resolution of $M$, given in the proof of Theorem \ref{main}, is not necessarily minimal and it is minimal if and only if the corresponding Betti numbers coincide, i.e. $\beta_i(gr_{\mathbb{M}}(M)) = \beta_i(M) $ for every $i \ge 0.$
In general the following inequalities hold:
\vskip 2mm
$\bullet\ \ $ $\beta_i(gr_{\mathbb{M}}(M))  \ge  \beta_i(M) {\text{ for every }} i \ge 0$
\vskip 2mm
Let $R$  be is a regular local ring and  denote by $pd(\ ) $ the projective  dimension of a module:
\vskip 2mm
$\bullet\ \ $ $ pd  (gr_{\mathbb{M}}(M)) \ge pd(M) $
\vskip 2mm
$\bullet\ \ $ $  depth\   (gr_{\mathbb{M}}(M)) \le  depth \ M. $

\vskip 2mm We are interested in finding classes of finitely
generated $R$-modules $M$  for which   the equalities hold.
Accordingly with \cite[Section 3.]{HRV} we give the following
definition.
\begin{definition} A filtered module $M$ is said to be of homogeneous type with respect to the given filtration $\mathbb{M} $ if  $\beta_i(gr_{\mathbb{M}}(M)) = \beta_i(M) $ for every $i \ge 0.$
\end{definition}

  When  the $R$-module $ M $ is of homogeneous type {\it {with respect to the $\n$-adic filtration}},   we simply say that {\it{$M$ is of homogeneous type}}.
  \vskip 2mm
  The $\n$-adic filtration has a particular interest and it produces the first interesting class  of modules  of homogeneous type: the Koszul modules introduced by J. Herzog and S. Iyengar in \cite{HI}. In fact, by  \cite[Proposition 1.5]{HI},  $M$ is a Koszul $R$-module if and only if $gr_{\n}(M) $ has a linear resolution as a $gr_{\n}(R)$-module which implies in particular that  $M$   is of homogeneous type.
  \vskip 2mm

\begin{remark} \label{l} {\rm{ Consider  $M=I$   an ideal of a regular local ring $(R, \n).   $ In this paper we will focus our attention on the following filtrations of    the ideal $I.  $

1. $\mathbb{M}= \{\n^p I\}  $ (the $\n$-adic filtration on $I$):  in this case  $gr_{\mathbb{M}}(I)= gr_{\n}(I)= \oplus_{p\ge 0} I\n^p/I\n^{p+1}.$
Accordingly with our setting, we say that $I$ is of homogeneous type if $\beta_i( gr_{\n}(I) ) = \beta_i(I) $ for every $i \ge 0.$

2. $\mathbb{M}= \{\n^p \cap I\}: $ in this case  $gr_{\mathbb{M}}(I)= I^* = \oplus_{p\ge 0} I \cap \n^p/I \cap \n^{p+1}=   \oplus_{p\ge 0}  I \cap \n^p + \n^{p+1}/ \n^{p+1} $ is the   ideal of the polynomial ring $P=gr_{\n}(R)$ generated by the initial forms of the elements of $I.$
Hence if $A=R/I$ and $\m=\n/I, $ the associated graded ring $gr_{\m}(A) \simeq P/I^*.$ According to our setting, we say that $I$ is of homogeneous type with respect to $\mathbb{M} $ if $\ \beta_i( gr_{\mathbb{M}}(I) ) = \beta_i(I) $ for every $i \ge 0.$ This is equivalent to say that $\beta_i(A)= \beta_i(gr_{\m}(A))  $  for every $i \ge 0,  $   that is
the  local ring $(A, \m) $ is of homogeneous type.  }}
\end{remark}

The following examples show how it is difficult  to find  modules of homogeneous type.
 \vskip
2mm
\begin{example}  {\rm (1) Let    $I=({{x^3}}-y^7,  {x^2y} -xt^3-z^6) $ be  in   $R= k[[x, y, z, t]]. $
    The ideal is a complete intersection and hence the resolution of $I$ as an $R$-module  is given by the Koszul complex. But $$I^*=(x^3, x^2y, x^2t^3,  xt^6,  x^2z^6, xy^9 - xz^6t^3, xy^8t^3,  y^7t^9) \subseteq P=k[x,y,z,t] $$  and hence $\beta_0(I^*)=8 > \beta_0(I)=2. $  Using    \cite{CoCoA}, it is possible to check that $\beta_1(I^*)=12 > \beta_1(I)=1, $ $\beta_2(I^*)=6 > \beta_2(I)=0, $ $\beta_3(I^*)=1 > \beta_3(I)=0. $

\vskip 2mm

\noindent The following example shows that $\beta_0(I) = \beta_0(I^*) $   and $ pd(I)= pd(I^*) $ do not force  $I $ to be  of homogeneous type.

\noindent (2) Consider the local ring $$A=k[[t^{19}, t^{26}, t^{34}, t^{40}]]= k[[x,y,z,t]]/I, $$ one can prove that $I$ is minimally generated by an  $\n$-standard base, i.e. $\beta_0(I)= \beta_0(I^*)=5,  $ $I$ and $I^*$ are perfect ideals (hence they have the same projective  dimension), nevertheless $I$ is not of homogeneous type with respect to $\mathbb{M}= \{\n^p \cap I\},$  neither  $A$ is of homogeneous type (see \cite[Example (3)]{HRV}). }

\end{example}
\vskip 2mm

\noindent Nevertheless examples of local rings  of homogeneous type (not necessarily Koszul) can be  given.

\begin{example} \label{examples} {\rm{ (1) Let $I$ be an ideal of $R$ generated by a super-regular sequence. This means that $I=(f_1,\dots, f_r) $ where $f_1, \dots, f_r $ is a regular sequence and an  $\n$-standard base of $I$, equivalently the initial forms $f_1^*, \dots, f_r^* $ are a regular sequence in $P=gr_{\n}(R) $ (see \cite{VV}). Then both $A=R/I $ and $I$ are of homogeneous type (see \cite[Example 1, Theorem 3.6]{HRV}).

(2) Let $I$ be the ideal generated by the maximal minors of a generic $ r \times s $ ($r \le s$) matrix $X= (x_{i j}) $ in $R=k[[ x_{ij}]], $ then $gr_{\n}(I) \simeq I (-r) $ has a linear resolution and it is easy to prove that $I$ is of homogeneous type.

(3) Let $I$ be an ideal of the regular ring $(R, \n) $ such that $
A= R/I    $ is Cohen-Macaulay of minimal multiplicity and let  $ \m=
\n/ I $, then J. Sally (see \cite{Sa}) proved that $gr_{\m}(A)
$    is Cohen-Macaulay of minimal degree  and $I $ has a standard
base of equimultiple elements of degree $2.$  From this, using
\cite[Lemma 3.3]{HRV}, one can  prove that $I$ is of homogeneous
type.

(4) Let $I$ be an ideal of the regular ring $(R, \n) $  generated by two elements, then $I$ is of homogeneous type (see \cite[Proposition 3.4]{HRV}).

(5)  Let $I$ be the defining  ideal of a monomial curve in $\mathcal{A}^3$ in  the regular ring $R$  such that $\nu(I)=\nu(I^*),$  then $A=R/I $ is of homogeneous type.  Because $I$ is a perfect ideal of  codimension two, it is enough to recall that  Robbiano and Valla in \cite{RoV1} proved that, in this case,  $gr_{\m}(A) $ is Cohen-Macaulay.   }}

\end{example}

We remark that Proposition 2.4. in \cite{RS}   gives us a criterion for  producing   more modules of homogeneous type.

\section{Properties of componentwise linear modules }

The componentwise linear modules over a polynomial ring had been introduced by Herzog and Hibi by enlarging the class of the graded modules with a $d$-linear resolution. Interesting results concerning their graded Betti numbers had been proved by Aramova, Conca, Herzog and Hibi (see \cite{H,HH,AHH,CHH,C}). Later R\"omer (see \cite{R}) studied more homological properties of the componentwise linear graded modules in the  general setting of finitely generated modules over  Koszul algebras (instead of polynomial rings),    some of them  partially overlap with those of Martinez and Zacharia in \cite{MZ}. Thanks  to the fact that componentwise linear modules and graded Koszul modules coincide, in the literature one can find two different approaches: the first coming from Herzog and Hibi's methods (see \cite{HH}, \cite{CHH}) dealing with graded ideals in the polynomial ring and  a  purely homological approach (see \cite{MZ} and \cite{HI}) on modules over Koszul algebras.  However, in view of the applications,  in this section we consider graded modules over a polynomial ring $P=k[x_1, \dots, x_n] $ even if most of the results hold in a more general setting.
 \vskip 2mm
 Let $N $  be a graded $P$-module.  For $d \in {\bf Z} $  we write $ N_{<d>} $ for
the submodule of $N$  which is generated by all homogeneous elements of $ N $ with
degree $ d.$  In the graded case we may also define  the graded Betti numbers, i.e.
 $$\beta_{i, j}(N) := dim_k Tor^P_i(k, N)_j.$$  If it will be clear the context, we will simply write  $\beta_{i,j}.$

 \begin{definition}  Let  $N$   be a graded $P$-module.

(i) Let $d \in{\bf{ Z}}.$  Then $ N$ has a $d$-linear resolution if  $\beta_{i, j} = 0 $ for $j \neq d+i.$

(ii) $ N $  is componentwise linear if for all integers $d $ the module $ N_{<d>} $ has a $d$-linear
resolution.
 \end{definition}

 For   more information concerning the componentwise linear modules,  see \cite{HH, C, R, CHH}. We select here some good properties of their  graded minimal free resolutions.

\vskip 2mm
 Set  $ indeg  (N) = min \{ d \in{ \bf{Z}} \ : \ N_{d} \neq 0. \} $  If $N$ is componentwise linear,  it is known that $ N/N_{<indeg(N)>} $ is componentwise linear too (see \cite[Lemma 3.2.2.]{R}). Let  $ ({\bf{G. }}, d.) $ be  the minimal graded free resolution of $ N  $ and define the subcomplex  $ ({\widetilde{\bf{G. }}, \widetilde{ d.}}) $  of $ ({\bf{G. }}, d.) $   by  $$ \widetilde{G_i} = P ( -(i+indeg(N)))^{\beta_{i, i +indeg(N)}} \subseteq G_i  \text{ and }  \widetilde{ d.}= d.|_{\widetilde{\bf{G. }}}.$$  R\"omer proved that  $ \ {\widetilde{\bf{G. }}}$ is the{\it{ minimal}}  (linear) graded free resolution of $N_{<indeg(N)>} $ and ${\bf{G. }}/\widetilde{\bf{G. }} $ is the minimal graded free resolution of $N/ N_{<indeg(N)>}  $ (see  \cite[Lemma 3.2.4.]{R}).

 As a consequence of these properties we  easily get the following  information that  have an intrinsic interest in the theory of componentwise linear modules.

 \begin{proposition} \label{resolution}  Let  $ N $ be a graded $ P$-module minimally generated in degrees  $ i_1, \dots, i_m. $     Assume $ N $ is componentwise linear  and  let  $ ({\bf{G.}}, d.) $ be    the minimal graded free resolution of $ N. $ Then for every $1 \le s \le pd(N) $ we have $$ Tor_s^P(k, N)_j=0  \text{  for }  j \neq i_1+s, \dots, i_m +s. $$
   In particular,  if for some $\ \ \ 1 \le s \le pd(N) \  \ $  and $\ \ 1 \le r \le m,\ \ \  $   $   Tor_s^P(k, N)_{i_r+s}=0,  \  $     then  $\   Tor_{s+1}^P(k, N)_{i_r+s+1}=0. $
 \end{proposition}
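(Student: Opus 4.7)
The plan is to prove the proposition by induction on $m$, the number of distinct minimal generating degrees of $N$, leveraging the splitting of the minimal free resolution for componentwise linear modules that was recalled from R\"omer just before the statement.

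For the base case $m=1$, the module $N$ is generated in a single degree $i_1$, hence $N=N_{<i_1>}$ has an $i_1$-linear resolution, and the vanishing $\mathrm{Tor}_s^P(k,N)_j = 0$ for $j \neq i_1 + s$ is immediate. For the inductive step with $m \geq 2$, set $i_1 = \mathrm{indeg}(N)$ and consider the exact sequence
\[
0 \longrightarrow N_{<i_1>} \longrightarrow N \longrightarrow N/N_{<i_1>} \longrightarrow 0.
\]
By the R\"omer decomposition cited in the paragraph preceding the statement, the subcomplex $\widetilde{\mathbf{G.}}$ is the minimal $i_1$-linear resolution of $N_{<i_1>}$ and the quotient complex $\mathbf{G.}/\widetilde{\mathbf{G.}}$ is the minimal graded free resolution of $N/N_{<i_1>}$, which is again componentwise linear and is generated precisely in the degrees $i_2, \dots, i_m$. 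Therefore, for every $s$ and $j$,
\[
\beta_{s,j}(N) \;=\; \beta_{s,j}(N_{<i_1>}) \;+\; \beta_{s,j}(N/N_{<i_1>}).
\]
Since $N_{<i_1>}$ has an $i_1$-linear resolution, $\beta_{s,j}(N_{<i_1>}) = 0$ unless $j = i_1 + s$; by the inductive hypothesis applied to $N/N_{<i_1>}$, $\beta_{s,j}(N/N_{<i_1>}) = 0$ unless $j \in \{i_2+s,\dots,i_m+s\}$. Combining these yields the first assertion.

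For the ``in particular'' part, I would split into two cases according to whether $r=1$ or $r\geq 2$. When $r \geq 2$, the above additivity gives $\mathrm{Tor}_s^P(k,N)_{i_r+s} = \mathrm{Tor}_s^P(k,N/N_{<i_1>})_{i_r+s}$ (the summand coming from $N_{<i_1>}$ vanishes because $i_r + s \neq i_1 + s$), so the inductive hypothesis applied to $N/N_{<i_1>}$ and combined again with linearity of the resolution of $N_{<i_1>}$ produces the vanishing at step $s+1$. When $r=1$, the hypothesis forces $\beta_{s,i_1+s}(N_{<i_1>}) = 0$; because the resolution of $N_{<i_1>}$ is minimal and linear, vanishing of the $s$-th free module of that resolution propagates to all higher homological degrees, so $\beta_{s+1,i_1+s+1}(N_{<i_1>}) = 0$, while $\beta_{s+1,i_1+s+1}(N/N_{<i_1>}) = 0$ automatically since that module is generated in degrees strictly greater than $i_1$.

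The only subtle point, and the one I would be most careful about, is justifying the additivity of graded Betti numbers: it is not a general fact about short exact sequences but follows here from the explicit splitting of minimal resolutions in R\"omer's lemma already cited in the preamble to the proposition. Once that is invoked, the argument is purely combinatorial bookkeeping on the degree shifts $i_r + s$, which are all distinct for fixed $s$, so no cross-interference between strands can occur.
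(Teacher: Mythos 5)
Your proof is correct and follows essentially the same route as the paper's: both peel off $N_{<\mathrm{indeg}(N)>}$ via the short exact sequence $0 \to N_{<i_1>} \to N \to N/N_{<i_1>} \to 0$, invoke R\"omer's splitting of the minimal resolution to obtain additivity of graded Betti numbers, and iterate (the paper phrases this as ``repeating this procedure'' where you make the induction explicit). You are slightly more thorough on the ``in particular'' clause, which the paper dismisses with ``hence the conclusion follows,'' but the underlying argument is identical.
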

 \begin{proof}
  Let $N_1=N_{<indeg(N)>}$.
   Since $N$ is componentwise linear, $N_1$ has $i_1=indeg(N)$-linear resolution and $\overline{N}=N/N_1$ is a componentwise linear module minimally generated in degrees  $ i_2, \dots, i_m$. Thus, from the short exact sequence
   $$0\to N_1\to N\to \overline{N}\to 0$$ we obtain $Tor_i^P(k,N)_j=Tor_i^P(k,N_1)_j\oplus Tor_i^P(k,\overline{N})_j$.   Repeating this procedure,
   we can find the sequence of graded modules $N_1,..., N_m$, such that $N_i$ has an $i_r$-linear resolution and
   $$Tor_i^P(k,N)_j=\oplus_{r=1}^mTor_i^P(k,N_r)_j.$$
   Hence, the conclusion follows.
  \end{proof}

  The following remark will clarify the shape of the matrices associated to the differential maps of the resolutions of componentwise linear modules.

  \begin{remark}{\label{matrici}} {\rm{ Let  $N $   be a graded $P$-module generated in degrees  $i_1, \dots, i_m. $ Assume $ N$  is componentwise linear  and  let  $ ({\bf{G.}}, d.) $ be    the minimal graded free resolution of $ N. $ Then by the above proposition  $G_s=  \oplus_{j=1}^m P^{\beta_{s, i_j +s}} (- (i_j +s))  $ for every $1 \le s \le pd(N).$ We want to describe the shape of the matrix $\mathcal{M}_s $ associated to
  $ G_s \overset{d_s} \to G_{s-1} $ with respect  to the canonical homogeneous bases of $G_s $ and $G_{s-1} $ of degrees  respectively $i_1+s, \dots, i_m+s $ and $i_1+s-1, \dots, i_m+s-1. $

By using Proposition \ref{resolution},  without
loss of generality, we may assume $\mathcal{M}_s $ of the following
shape:

   \xymatrix@C=   2ex@R=0.5ex{
 & &                i_1+s       & &        i_2+s          & &                    & &             i_m+s   & && \\
 & & \hspace{7ex}\drop\frm{^\}} & & \hspace{7ex}\drop\frm{^\}} & & & & \hspace{7ex}\drop\frm{^\}} &  &  \\
 & \ar@{-}@/_1pc/[dddddddd] & & \ar@{.}[dddddddd]  &   &  \ar@{.}[dddddddd]  & &   \ar@{.}[dddddddd]  &  &  \ar@{-}@/^1pc/[dddddddd]   &  \\
i_1+s-1 \,\bigg\{ & &     B_{i_1 i_1 s}           & &    B_{i_1 i_2 s}    & &       \ldots         & &             B_{i_1 i_m s} &  \\
 & \ar@{.}[rrrrrrrr] &&&&&&&&& \\
i_2+s-1 \,\bigg\{ & &      0               & &    B_{i_2 i_2 s}    & &       \dots       & &          B_{i_2 i_m  s }& & \\
  & \ar@{.}[rrrrrrrr] &&&&&&&&& \\
                      & &        0               &&        0          &&            \ddots                &&          \vdots              && \\
  & \ar@{.}[rrrrrrrr] &&&&&&&&& \\
 i_m +s-1 \,\bigg\{ & &               0        & &       \dots         & &               0            & &      B_{i_m  i_m  s }  & & \\
 &&&&&&&&&&
}

\noindent where all  the non zero entries of $  B_{i_1 i_1 s}, B_{i_2 i_2 s}, B_{i_m  i_m  s } $ (diagonal blocks) are linear forms and the non zero entries of $ B_{i_p  i_q  s } $ with $ p < q$  (up-diagonal blocks) are forms of degree at least  two.

}}

\end{remark}

\begin{remark} {\label{property}}  {\rm{Let $ N$   be a componentwise linear graded $P$-module. Set $ N_1: =N  $ and for every $j=2, \dots, m $ we define $$N_j  := N_{j-1} /(N_{ j-1})_{<indeg(N_{j-1}>}  =N_{j-1}/ (N_{ j-1})_{<i_{j -1}>}.  $$

\noindent By  \cite[Lemma 3.2.2  and Lemma 3.2.4.]{R}, it is easy to show that, for every  $1 \le s \le pd(N),  $ the  matrices $ B_{i_j i_j s},  $   (on the diagonal) in $\mathcal{M}_s$ have the following properties :
\vskip 2mm
$\bullet$   in each column of  $ B_{i_j i_j s}  $ at least one entry is different from zero.
\vskip 2mm
$\bullet$ the columns of $ B_{i_j i_j s}   $ minimally generate the $s$-th syzygy module of $(N_j)_{<i_j>}.$
\vskip 2mm
Indeed it is enough to remark that the matrices $ B_{i_j i_j s},  $ can be considered as the matrices associated to the differential maps of  the minimal free resolution of $(N_j)_{<i_j>} $ which has linear resolution. }}
\end{remark}
\vskip 3mm
We present the following example in order to help the reader to visualize better the resolutions of componentwise linear modules.

\begin{example} {\rm{ Let $P=k[x_1, x_2,x_3,x_4] $ and $I=(x_1^2, x_1x_2, x_2^2, x_1x_3, x_2x_3^2, x_1x_4^3, x_3^4).$ The ideal $I$ is Borel-fixed, so $I$ is componentwise linear and the minimal free resolution of $I$ is:

\noindent $0 \to P(-7) \overset{d_3} \to P(-4)\oplus P(-5) \oplus P^4(-6) \overset{d_2}\to P^4(-3)\oplus P^2(-4) \oplus P^5(-5) \overset{d_1} \to P^4(-2)\oplus P(-3) \oplus P^2(-4) \to I \to 0$

\noindent According to    Remark \ref{matrici}, we have }}
 \[
 \xymatrix@C=   0.3ex@R=0.1 ex{
 &\ar@{-}@/_0.8pc/[dddddddddd] & & & & &  \ar@{-}[dddddddddd] & & & \ar@{-}[dddddddddd] & & & & & & \ar@{-}@/^0.8pc/[dddddddddd]\\
 && x_2 & x_1 & 0 & 0 &     & 0 & 0 &        & x_4^3 & 0 & 0 & 0 & x_3^3 & \\
 && 0 & 0 & x_1 & 0 &        & x_3^2 & 0 &      & 0 & 0 & 0 & 0 & 0 & \\
 &&-x_3 & 0 & -x_2 & x_1 &    & 0 & x_3^2 &    & 0 & x_4^3 & 0& 0& 0 & \\
{\mathcal{M}_1} = \ \ && 0 & -x_3 & 0 & -x_2 &   & 0 & 0 &      & 0 & 0 & x_4^3 & 0 & 0 & \\
&  \ar@{-}[rrrrrrrrrrrrrr] &&&&&&&&&&&&&& \\
 &&0&0&0&0&    &-x_2&-x_1&    &0&0&0&x_3^2&0& \\
 &  \ar@{-}[rrrrrrrrrrrrrr] &&&&&&&&&&&&&& \\
   &&0&0&0&0&    &0&0&    &0&0&0&-x_2&-x_1& \\
   &&0&0&0&0&    &0&0&    &-x_3&-x_2&-x_1&0&0& \\
 &&&&&&&&&&&&&&&
 }
\]
   \vskip 2mm

\noindent {\rm One can find a similar shape for  ${\mathcal{M}_2} $ and ${\mathcal{M}_3}.$ }

\end{example}

\vskip 1cm

\section{Extremal Betti numbers}

In this section, we present  the main result of the paper and the application to the minimal free resolutions  of a local ring. We denote by $\mu(\ ) $ the minimal number of generators of a module over  a local ring (or the minimal number of generators of a graded module over the polynomial ring).

\begin{theorem} \label{M} Let $M$ be a finitely generated filtered module over a regular local ring $(R, \n).$ Assume that
\begin{enumerate}
\item $\mu(M) = \mu(gr_{\mathbb{M}} (M)) $
\item $ gr_{\mathbb{M}} (M) $ is a componentwise linear  $P$-module.
\end{enumerate}
Then $M$ is of homogeneous type with respect to $\mathbb{M}.$
\end{theorem}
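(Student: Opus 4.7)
I would induct on the number $m$ of distinct degrees $i_1<\cdots<i_m$ appearing among the minimal generators of $gr_{\mathbb{M}}(M)$, in each case showing that the $R$-free resolution of $M$ produced from the minimal graded $G$-free resolution $(\mathbf{G}_\bullet,d_\bullet)$ of $gr_{\mathbb{M}}(M)$ via Theorem \ref{main} is already minimal; this gives $\beta_i(M)=\mathrm{rk}\,F_i=\beta_i(gr_{\mathbb{M}}(M))$ for every $i\ge 0$.

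\textbf{Base case $m=1$.} Here $gr_{\mathbb{M}}(M)$ has an $i_1$-linear resolution, so every shift in $\mathbf{G}_\bullet$ equals $a_{j,i}=i_1+j$. In the $R$-resolution constructed in Theorem \ref{main}, each coefficient $h_{k,i}$ of the lift $f_{j-1,i}=\sum_k h_{k,i}\,e_{j-1,k}$ satisfies $\nu_R(h_{k,i})\ge a_{j,i}-a_{j-1,k}=1$, hence lies in $\n$. Thus every $\delta_j$ has matrix entries in $\n$ and the lifted resolution is already minimal.

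\textbf{Inductive step $m>1$.} Set $N_1:=gr_{\mathbb{M}}(M)_{<i_1>}$ and $\overline N:=gr_{\mathbb{M}}(M)/N_1$; by the R\"omer results quoted just before Proposition \ref{resolution}, both are componentwise linear and the minimal graded resolutions fit in a short exact sequence $0\to\mathbf{G}(N_1)\to\mathbf{G}(gr_{\mathbb{M}}(M))\to\mathbf{G}(\overline N)\to 0$. Using hypothesis (1), choose minimal generators $f_1,\dots,f_\mu$ of $M$ whose initial forms minimally generate $gr_{\mathbb{M}}(M)$, and let $M_1\subseteq M$ be the submodule generated by those $f_\alpha$ with $\nu_{\mathbb{M}}(f_\alpha)=i_1$, equipped with the induced filtration. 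Theorem \ref{esiste} together with the Nakayama observation that $f_1,\dots,f_\mu$ remain $k$-linearly independent in $M/\n M$ yields $gr(M_1)=N_1$, $gr(M/M_1)=\overline N$, and $\mu(M/M_1)=\mu(\overline N)$. The base case applies to $M_1$ and the inductive hypothesis to $M/M_1$, giving $\beta_i(M_1)=\beta_i(N_1)$ and $\beta_i(M/M_1)=\beta_i(\overline N)$. To transfer this to $M$, I would apply Theorem \ref{main} in a coordinated way to lift the graded SES to a short exact sequence of $R$-free resolutions $0\to\mathbf{F}(M_1)\to\mathbf{F}(M)\to\mathbf{F}(M/M_1)\to 0$: the middle differential then appears in horseshoe block form, with the two minimal outer differentials on the diagonal blocks and a connecting block $\mathbf{F}(M/M_1)_j\to\mathbf{F}(M_1)_{j-1}$ whose entries $\phi_{k,i}$, by the filtration-preserving property of Theorem \ref{main}, satisfy $\nu_R(\phi_{k,i})\ge(i_r+j)-(i_1+j-1)=i_r-i_1+1\ge 2$ in accordance with the degree bound in Remark \ref{matrici}, hence lie in $\n^{2}\subseteq\n$. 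Consequently $\mathbf{F}(M)$ is minimal, and one reads off
$$\beta_i(M)=\beta_i(M_1)+\beta_i(M/M_1)=\beta_i(N_1)+\beta_i(\overline N)=\beta_i(gr_{\mathbb{M}}(M)),$$
closing the induction.

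\textbf{Main obstacle.} The coordinated lifting of the graded SES of minimal resolutions to an honest short exact sequence of $R$-free resolutions. Theorem \ref{main} supplies each of the three resolutions individually, but one has to choose the liftings $f_{j-1,i}$ in a compatible way so that the three complexes glue together. I would do this inductively on homological degree: first build $\mathbf{F}(M_1)$ via Theorem \ref{main} applied to $N_1$; then extend the chosen lifts at each step to liftings of generators of the larger $Ker(d_{j-1}^{gr_{\mathbb{M}}(M)})$ by appending preimages in $M$ of the corresponding generators of $Ker(d_{j-1}^{\overline N})$, which exist with the required filtration shift precisely because Theorem \ref{main} guarantees strict compatibility of the graded and filtered data; finally project to obtain $\mathbf{F}(M/M_1)$. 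Once this compatibility is established, the degree bound of Remark \ref{matrici} finishes the proof.
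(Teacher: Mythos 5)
Your proposal diverges substantially from the paper's proof: the paper does \emph{not} induct on the number $m$ of generating degrees or decompose $M$ into a sub and quotient; it fixes the minimal resolution of $gr_{\mathbb{M}}(M)$ once and for all, lifts it to an $R$-free resolution of $M$ via Theorem~\ref{main}, and then proves minimality directly by a block-matrix valuation argument, inducting on homological degree. Your base case ($m=1$, the linear-resolution case) is fine and essentially recovers the known fact that Koszul modules are of homogeneous type. But your inductive step has a genuine gap.

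The critical unjustified claim is $gr(M_1)=N_1$, where $M_1$ is the $R$-submodule of $M$ generated by the minimal generators of valuation $i_1$ and carries the \emph{induced} filtration $\{M_1\cap M_j\}$. Unwinding the definitions, $gr(M_1)=N_1$ is equivalent to saying that $\{f_\alpha:\nu_{\mathbb{M}}(f_\alpha)=i_1\}$ is a standard basis of $M_1$, i.e.\ that every first syzygy of $\{f_\alpha^*:\nu=i_1\}$ inside $N_1=gr_{\mathbb{M}}(M)_{\langle i_1\rangle}$ lifts to a syzygy of the $f_\alpha$'s \emph{inside} $M_1$. But hypothesis (1) only tells you (via Theorem~\ref{esiste}) that $\{f_\alpha : \text{all }\alpha\}$ is a standard basis of the whole module $M$, so a syzygy of the degree-$i_1$ initial forms lifts to a syzygy in $M$ that in general involves the higher-valuation generators as well; there is no reason it can be chosen supported only on $M_1$. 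Concretely, an element $g=\sum_\alpha a_\alpha f_\alpha\in M_1$ can have valuation strictly larger than $\min_\alpha\nu_R(a_\alpha)+i_1$ (a ``jump''), in which case $g^*$ need not lie in $N_1$, and then $gr(M_1)\supsetneq N_1$. Nothing in Theorem~\ref{esiste}, Nakayama, or componentwise linearity of $gr_{\mathbb{M}}(M)$ rules this out, and this is exactly the kind of failure that makes the local/filtered setting harder than the graded one. Without $gr(M_1)=N_1$ you also lose $gr(M/M_1)=\overline{N}$, so neither the base case nor the inductive hypothesis applies to the pieces, and the horseshoe argument has nothing to glue.

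A secondary issue: even granting the decomposition of filtered modules, your valuation bound $\nu_R(\phi_{k,i})\ge i_r-i_1+1$ on the horseshoe connecting block is only a \emph{degree} count in the graded picture; transporting it to a valuation bound on the local lift requires the coordinated lifting you sketch in the ``Main obstacle'' paragraph to actually respect the special filtrations, which needs a proof, not an appeal to Theorem~\ref{main} (that theorem handles one module at a time). The paper sidesteps both difficulties by never splitting $M$: it writes the differential $\mathcal{M}_{s+1}$ in block form against the block form of $\mathcal{M}^*_{s+1}$ from Remark~\ref{matrici}, observes that diagonal blocks have entries of valuation exactly $1$, upper blocks valuation $\ge 2$, lower blocks valuation $\ge 1$, and shows that any $R$-linear dependence among the columns of $\mathcal{M}_{s+1}$ with a unit coefficient would force a $k$-linear dependence among columns of some diagonal block of $\mathcal{M}^*_{s+1}$, contradicting Remark~\ref{property}. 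That argument uses componentwise linearity only through the block shape of the graded matrices and never needs a sub/quotient decomposition of $M$ itself.
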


\begin{proof} For short we denote  $ gr_{\mathbb{M}} (M) $ by $  M^* . $    By Corollary \ref{resolution},  its    minimal graded free resolution     $({\bf {G.}},  d.  )$   as  a $P=gr_{\n}(R)$-module has the following shape:
$$ 0 \to \oplus_{j=1}^m P^{\beta_{h, i_j+h}} (-(i_j +h)) \overset{d_h}\to \dots \overset{d_1} \to \oplus_{j=1}^m P^{\beta_{0, i_j }} (- i_j) \to M^* \to 0 $$
where $h=pd(M^*) $ and $0 < i_1<i_2< \dots < i_m $ are  the degrees  of a minimal system of generators of $M^*. $ Denote by $\beta_t (M^*) $ the total Betti numbers of $M^*.$  From  $({\bf {G.}},  d.)$  we can build up a free resolution $({\bf{F.}, \delta.})$ of $M$ by the inductive process described in Theorem \ref{main}:
$$ 0 \to \oplus_{j=1}^m R^{\beta_{h, i_j+h}} (-(i_j +h)) \overset{\delta_h}\to \dots \overset{\delta_1} \to \oplus_{j=1}^m R^{\beta_{0, i_j }} (- i_j) \to M \to 0. $$
We have to prove  that $({\bf{F.}, \delta.})$  is  minimal.   For every $t=0, \dots, h $ denote by ${\mathcal{M}}^*_{t} $ (resp. ${\mathcal{M}}_{t} $) the matrix of the differential map $d_{t} $ (resp. $\delta_{t} $).  We   prove  that the columns of $ {\mathcal{M}}_t $ for $t=0, \dots, h $ {\it{ minimally }} generate the $t$-th syzygy module of $M,  $  that is $Ker(\delta_{t-1}).$   We  proceed  by   inductive process.
The first step ($t=0$) follows easily from the    assumption $\mu(M) = \mu(M^*), $  which says that a minimal system of generators of $M^*$ (say $g_{-1,i}$) build up a minimal system of generators of $M $ (say $f_{-1,i}$). Suppose now that for each $ 0 \le t \le s < h$ we have proved that
$$F_s=\oplus_{j=1}^m R^{\beta_{s, i_j+s}} (-(i_j +s)) \overset{\delta_s}\to \dots \overset{\delta_1} \to F_0=\oplus_{j=1}^m R^{\beta_{0, i_j }} (- i_j) \overset{\delta_0}\to M  \to 0 $$ is part of a {\it{minimal}} free resolution of $M. $  This means in particular that,  if $d_s(e_{s,r}))=g_{s-1,r}$ and $f_{s-1, r} $ is the corresponding lifting in the filtered module $F_{s-1} $ ($F_{-1}=M$), then $\{f_{s-1,1}, \dots, f_{s-1, \beta_s} \} $ is a minimal  generating set for the $s$-th syzygy module. By following  the proof of Theorem \ref{main},
we have to prove now that
$$ F_{s+1} = \oplus_{i=1}^{\beta_{m}} R^{\beta_{s+1, i_j+s+1}} (-(i_j +s+1)) \overset{ \delta_{s+1}} \to  F_{s } \to F_{s-1}   $$  is part of a minimal free resolution of $M.$ Because $\{f_{s-1,1}, \dots, f_{s-1, \beta_s} \} $ is a minimal  generating set for the module their generated, we conclude that all entries of ${\mathcal{M}}_{s+1}   $ belong to $\n.$ The goal is to prove that  the columns of the matrix  ${\mathcal{M}}_{s+1}   $ {\it{ minimally }} generate  the $s+1$-th syzygy module of $M.$

Since $M^*$ is componentwise linear, accordingly with Remark \ref{matrici},  we may assume that $\mathcal{M}_{s+1}^*  $  has  the following shape:

    \xymatrix@C=   1.0ex@R=0.15ex{
 & &                i_1+s+1        & &        i_2+s+1           & &                    & &             i_{m}+s+1   & && \\
 & & \hspace{7ex}\drop\frm{^\}} & & \hspace{7ex}\drop\frm{^\}} & & & & \hspace{7ex}\drop\frm{^\}} &  &  \\
 & \ar@{-}@/_1pc/[dddddddd] & & \ar@{.}[dddddddd]  &   &  \ar@{.}[dddddddd]  & &   \ar@{.}[dddddddd]  &  &  \ar@{-}@/^1pc/[dddddddd]   &  \\
i_1+s  \,\bigg\{ & &     M_{i_1 i_1 s+1}^*           & &    M_{i_1 i_2 s+1}^*    & &       \ldots         & &             M_{i_1 i_m s+1}^* &  \\
 & \ar@{.}[rrrrrrrr] &&&&&&&&& \\
i_2+s  \,\bigg\{ & &      0               & &    M_{i_2 i_2 s+1}^*    & &       \dots       & &          M_{i_2 i_m  s+1}^* & & \\
  & \ar@{.}[rrrrrrrr] &&&&&&&&& \\
                      & &        0               &&        0          &&            \ddots                &&          \vdots              && \\
  & \ar@{.}[rrrrrrrr] &&&&&&&&& \\
 i_m +s  \,\bigg\{ & &               0        & &       \dots         & &               0            & &      M_{i_m  i_m  s+1}^*  & & \\
 &&&&&&&&&&
}

  The columns of $\mathcal{M}_{s+1}^*  $ are the initial forms of the columns of $\mathcal{M}_{s+1} $ with respect to the special filtration $(F_s)_p = \oplus_{j=1}^m \oplus_{k=1}^{\beta_{s, i_j+s}} \n^{p-i_j-s} e^s_{k, i_j} $ on $F_s.$  In particular by looking the degree matrix of $\mathcal{M}_{s+1}^*,  $ the elements of  $ M_{i_p i_q  s+1}^*$ have degree $i_q-i_p +1 $ and   the matrices $ M_{i_j i_j s+1}^*  $ (on the diagonal) have the good properties described in   Remark \ref{property}.

Denote now by $ M_{i_p i_q s+1}$ the   blocks corresponding to the rows labeled by $i_p +s $ and the columns labeled by $ i_q+s+1  $ in $\mathcal{M}_{s+1}. $ Remark that in  $\mathcal{M}_{s+1}  $ the non zero entries of the  blocks  $ M_{i_p i_q s+1}$    have  valuation at least   $i_q-i_p +1.$ Hence if $ p <q $ the non  zero elements have valuation at  least $2. $   Notice that the non zero entries of the blocks with $p >q $ (under the diagonal blocks) have valuation $\ge 1 $ because all the entries belong to $ \n. $

Denote by $C_{i_k j} $ with $k=1, \dots, m $ and $ j=1, \dots, \beta_{s+1, i_k+s+1} $ the columns of  $\mathcal{M}_{s+1}  $ (respectively $C_{i_k j}^*  $ those of  $\mathcal{M}_{s+1}^* $).  By Remark \ref{property} in each column $C_{i_k j} $ there is at least one element of valuation $1$ and it belongs to the block $M_{i_k i_k s+1}.$ Suppose now that there exists $ (k, j) $ such that $$C_{i_k j} =\sum_{r=1}^m \ \ \sum_{p=1 }^{\beta_{s+1, i_r+s+1}}  \lambda_{(i_r, p)}
C_{i_r  p} $$  with $(i_r, p) \neq (i_k, j) .$ Because in $C_{i_k j} $ there is at least one element of valuation $1  $ and the entries of $M_{i_k i_r s+1} $ with $r >k $  have valuation at least $2, $ necessarily there exists an integer $u$ with $1\le u \le k$   such that $  \lambda_{(i_u, p)} \not \in \n $ for some $p. $ Assume $u$ the least integer with such property.  This  leads to prove  that the columns of $M_{i_u i_u s+1}^* $ are not linearly independent against   Remark \ref{property}.  Assume $u=k  $ and let $ C_{i_k p_1}, \dots,  C_{i_k p_t} $ be columns corresponding to invertible coefficients $\lambda_{(i_k, p_1)}, \dots, \lambda_{(i_k, p_t)} $  (in the band $i_k +s +1$).
    By using again that  the entries of $M_{i_k i_r s+1} $ with $r >k $  have valuation at least $2, $ one can easily prove that
 the columns   of $M_{i_k i_k  s+1}^* $ corresponding to $ (i_k, j), (i_k,  p_1), \dots, (i_k,  p_t) $ are not linearly independent. In the same way if $u<k $ we repeat the same argument    on $M_{i_u i_u  s+1}^*  $ and we get the conclusion.

\end{proof}

 \vskip 2mm

One of the main goal of the paper is the application  of the above result  to an ideal  $I$  of the regular local ring $(R, \n).$ In particular, we are interested in comparing the numerical invariants of the minimal $R$-free resolution of  $A=R/I$ and those of the minimal graded $P$-free resolution of $gr_{\m}(A) = P/ I^*$ where $\m= \n/I$ and $I^*$ is the graded  ideal generated by the initial forms of $I.$ We recall that if we apply  the general theory on filtered modules  to $M=I$ and $\mathbb{M}= \{\n^p \cap I\}, $ as we described in Remark \ref{l}, we obtain $gr_{\mathbb{M}}(M)= I^*.$

\vskip 2mm
As we have already said  \begin{equation} \label{<} \beta_i(R/I ) \le \beta_i( P/I^*). \end{equation}  Theorem \ref{M} says that if $I$ is minimally generated by an $\n$-standard base and $I^*$ is componentwise linear, then the equality holds.  Notice that in Theorem \ref{M} the  assumption which $I$ is minimally generated by an $\n$-standard base is necessary and  it is not a consequence of the assumption  that $I^*$ is componentwise linear. For example, if $I$ is the defining ideal of $A=k[[ t^{10}, t^{19}, t^{21}, t^{53}]],  $ then one can check that $I^* $  is componentwise linear, but $ \mu(I)=5 $ and $ \mu(I^*)=7. $

\vskip 2mm
The   inequality (\ref{<})  and Theorem \ref{M}  suggest  us new
 upper bounds coming from the homogeneous context. From now on assume the residue field $k$ of characteristic $0.$ We have two monomial ideals canonically attached to $I^*: $ the generic initial ideal  with respect to   the   revlex order and the lex-segment ideal of $I^*$ characterized by Macaulay's theorem.  They play a fundamental role in the investigation of many algebraic, homological, combinatorial and geometric properties of the ideal  $I$  itself. We denote  $$ Gin(I):= Gin (I^*) \ \text{ and } \ Lex(I) := Lex(I^*). $$ For the first equality, notice that in   \cite{B}    it is proved that if $R=k[[x_1,\dots,x_n]], $ one can define  an anti-degree-compatible ordering   on the terms of $R$ such that the leading term  ideal of $I, $ after performing a `generic change'  of coordinates, is a monomial ideal  which coincides with $Gin(I^*). $   The second equality  is clear from Macaulay's Theorem because the Hilbert function of the local ring $A=R/I$ is the Hilbert function of $ gr_{\m}(A)=P/I^*.$  All the involved   monomial ideals have  the same Hilbert function,    indeed  $$ HF_A(n) = HF_{gr_{\m}(A)}(n)= HF_{P/I^*}(n) = HF_{P/Lex(I)}(n)=  HF_{P/Gin(I)}(n), $$ nevertheless,  since $ \beta_i(R/I^* ) \le      \beta_i( P/Gin(I^*))   \le \beta_i( P/Lex(I^*)),   $  we have
\begin{equation} \label{inequality}  \beta_i(R/I ) \le      \beta_i( P/Gin(I))   \le \beta_i( P/Lex(I)) \end{equation}
for every $i \ge 0. $    The first inequality follows  by standard deformation argument, the second was proved by A. Bigatti \cite{B} and H.A. Hulett \cite{Hu} in characteristic zero and extended later by K. Pardue \cite{P} to positive characteristic.

\noindent   Componentwise linear ideals have been characterized by A. Aramova, J. Herzog and T. Hibi in \cite{AHH} as those ideals having the same Betti numbers as their generic initial ideal.

\begin{theorem} \label{HH} \rm{\cite[Theorem 1.1.]{AHH}}.
Let $J$ be a  homogeneous ideal of $P.$ The following facts are equivalent:

i) $\mu(J)= \mu(Gin(J)) $

ii) $\beta_i(J)= \beta_i(Gin(J)) $ for every $i \ge 0.$

iii) $\beta_{ij} (J)= \beta_{i j} (Gin(J))  $ for every $i, j \ge 0.$

iv) $J$ is componentwise linear
\end{theorem}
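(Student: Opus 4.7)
The implications \textup{(iii)} $\Rightarrow$ \textup{(ii)} $\Rightarrow$ \textup{(i)} are trivial: sum the graded Betti numbers over the internal degree $j$, then specialize to $i=0$ using $\mu(\,\cdot\,)=\beta_0(\,\cdot\,)$. The substantive content lies in \textup{(iv)} $\Rightarrow$ \textup{(iii)} and \textup{(i)} $\Rightarrow$ \textup{(iv)}. A key background fact used throughout is that, in characteristic zero, $Gin(J)$ is Borel-fixed, hence strongly stable, and strongly stable monomial ideals are componentwise linear (for instance via the Eliahou--Kervaire resolution, which places their Betti numbers in exactly the linear strands predicted by Proposition~\ref{resolution}).

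For \textup{(iv)} $\Rightarrow$ \textup{(iii)}: under this hypothesis both $J$ and $Gin(J)$ are componentwise linear and share the same Hilbert function, since $Gin$ preserves Hilbert functions. By Proposition~\ref{resolution}, the graded Betti numbers of any componentwise linear ideal $K$ decompose as a direct sum over the linear strands indexed by the generator degrees $i_1<\cdots<i_m$, with the $r$-th summand computed from the resolution of a module with $i_r$-linear resolution. Since a module with a linear resolution has its Betti numbers determined by its Hilbert function via the alternating-sum identity coming from the Koszul complex, and since for a componentwise linear ideal the Hilbert function of each strand is itself determined by the Hilbert function of $K$, two componentwise linear ideals with the same Hilbert function must have the same graded Betti numbers. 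Hence $\beta_{i,j}(J)=\beta_{i,j}(Gin(J))$ for all $i,j$.

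For \textup{(i)} $\Rightarrow$ \textup{(iv)}: I would argue strand-by-strand. First, the hypothesis $\mu(J)=\mu(Gin(J))$ together with the degree-wise inequality $\beta_{0,d}(J)\le\beta_{0,d}(Gin(J))$ forces $\mu_d(J)=\mu_d(Gin(J))$ for every $d$ by summing and comparing. Next, one shows that under this degree-wise equality $Gin(J_{\langle d\rangle})=Gin(J)_{\langle d\rangle}$ for every $d$; the inclusion $\subseteq$ is standard, and promoting it to equality uses that both sides have the same Hilbert function, computable from the preserved generator counts. Since $Gin(J)_{\langle d\rangle}$ is a strongly stable monomial ideal generated in the single degree $d$, it has a $d$-linear resolution; combining with the refined inequality $\beta_{i,j}(J_{\langle d\rangle})\le\beta_{i,j}(Gin(J_{\langle d\rangle}))=\beta_{i,j}(Gin(J)_{\langle d\rangle})=0$ for $j\neq i+d$ forces $J_{\langle d\rangle}$ to have a $d$-linear resolution. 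Doing this for every $d$ shows that $J$ is componentwise linear. A conceptually cleaner alternative routes the entire argument through Theorem~\ref{M}: realize $Gin(J)$ as $gr_{\mathbb{M}}(J)$ for the stable filtration coming from the generic revlex weight degeneration, so that componentwise linearity of $Gin(J)$ together with hypothesis (1) immediately yields $\beta_i(J)=\beta_i(Gin(J))$ for all $i$, which refines strand-wise to \textup{(iii)}, and then to \textup{(iv)} by inspecting the block-triangular resolution structure of Remark~\ref{matrici}.

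The main obstacle is the identity $Gin(J_{\langle d\rangle})=Gin(J)_{\langle d\rangle}$: the inclusion is easy, but equality requires a careful Hilbert-function bookkeeping, tracking the generator count in each degree and exploiting that $Gin$ is extremal for generator counts. In the alternative route via Theorem~\ref{M}, the difficulty shifts to verifying precisely that the revlex-weight degeneration fits the $\mathfrak{n}$-stable-filtration framework of the paper, i.e.\ identifying $Gin(J)$ literally with $gr_{\mathbb{M}}(J)$ as graded $P$-modules rather than merely as abstractly isomorphic ideals. The final deduction of componentwise linearity from Betti-number equality with the componentwise linear ideal $Gin(J)$ is then a routine inspection of the resolution shape described in Remarks~\ref{matrici} and \ref{property}.
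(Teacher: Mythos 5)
The paper does not prove this statement; it is quoted verbatim as Theorem 1.1 of \cite{AHH} and used as a black box in deriving Corollary~\ref{I*}, so there is no in-paper argument to compare against. Judged on its own, your proposal has a genuine gap in the direction \textup{(iv)} $\Rightarrow$ \textup{(iii)}. You assert that, for a componentwise linear ideal $K$, the Hilbert function of each strand is determined by the Hilbert function of $K$, and you conclude that any two componentwise linear ideals with the same Hilbert function share all graded Betti numbers. Both claims are false. In $P=k[x,y,z]$ the ideals $I=(x^2,xy,y^2)$ and $J=(x^2,xy,xz,y^3)$ are Borel-fixed, hence componentwise linear, and $P/I$, $P/J$ have the same Hilbert function $1,3,3,3,\dots$; yet $\mu(I)=3$ while $\mu(J)=4$, so already $\beta_0$ differs, and indeed $I_{\langle 2\rangle}=I$ whereas $J_{\langle 2\rangle}=(x^2,xy,xz)$ has a strictly smaller Hilbert function in degree $3$. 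So the strand-by-strand comparison cannot be driven by the Hilbert function alone.

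What actually makes the theorem true is the specific relationship between $J$ and its own $Gin(J)$, encoded in Aramova--Herzog--Hibi's key lemma: if $J$ is componentwise linear then $Gin(J_{\langle d\rangle})=Gin(J)_{\langle d\rangle}$ for every $d$. That identity is what lets you match the strand Hilbert functions of $J$ and $Gin(J)$ degree by degree and then compare the two linear resolutions. You do invoke this identity in your sketch of \textup{(i)} $\Rightarrow$ \textup{(iv)} and flag it as ``the main obstacle,'' but you need it in \textup{(iv)} $\Rightarrow$ \textup{(iii)} as well, and in neither place is it proved (the inclusion you call ``standard'' is the easy half; the equality is the substance). Repairing the argument therefore amounts to supplying a proof of that lemma, which is the actual core of \cite[Theorem 1.1]{AHH}; the alternative route you sketch through Theorem~\ref{M} also does not dodge this, since deducing componentwise linearity of $J$ from Betti-number equality with $Gin(J)$ is precisely \textup{(iii)} $\Rightarrow$ \textup{(iv)} and cannot be dismissed as routine inspection.
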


\noindent Generalization of this result have been proved in  \cite{CHH, C, P}.

\vskip 2mm Let now $I $ be an ideal in the local ring $R  $ and we  present   the similar result in the local setting.
Since $\mu(I) \le  \mu(I^*)  \le \mu(Gin(I)),   $  as a corollary of Theorem \ref{M} and the above result, we deduce  the following characterization.

\begin{corollary} {\label{I*}}  Let $I$ be an  ideal of the regular local ring $(R, \n).$ The following facts are equivalent:

i) $\mu(I)= \mu(Gin(I)) $

ii) $\beta_i(I )= \beta_i(I^*) = \beta_i(Gin(I))$ for every $i \ge 0.$

iii) $\mu(I)= \mu(I^*) $ and  $ I^* $ is componentwise linear
\end{corollary}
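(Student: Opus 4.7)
The plan is to prove the cycle (i) $\Rightarrow$ (iii) $\Rightarrow$ (ii) $\Rightarrow$ (i), combining Theorem \ref{M} with the graded characterization of componentwise linear ideals in Theorem \ref{HH}. The underlying scaffolding is the double chain
$$\mu(I)\le\mu(I^{*})\le\mu(\Gin(I)),\qquad \beta_{i}(I)\le\beta_{i}(I^{*})\le\beta_{i}(\Gin(I)),$$
where the leftmost inequalities come from Theorem \ref{main} applied with the filtration $\mathbb{M}=\{I\cap\n^{p}\}$ (so that $gr_{\mathbb{M}}(I)=I^{*}$ by Remark \ref{l}), and the rightmost inequalities are exactly (\ref{inequality}), arising from the flat deformation from $I^{*}$ to $\Gin(I^{*})=\Gin(I)$.

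For (i) $\Rightarrow$ (iii), I would push the hypothesis $\mu(I)=\mu(\Gin(I))$ through the chain to conclude $\mu(I)=\mu(I^{*})=\mu(\Gin(I))$. Since $\Gin(I)=\Gin(I^{*})$, the equality $\mu(I^{*})=\mu(\Gin(I^{*}))$ is precisely condition (i) of Theorem \ref{HH} applied to $J=I^{*}$, hence equivalent to $I^{*}$ being componentwise linear. For (iii) $\Rightarrow$ (ii), I would specialize Theorem \ref{M} to $M=I$ with $\mathbb{M}=\{I\cap\n^{p}\}$; the two hypotheses of Theorem \ref{M} are then granted directly by (iii), so the theorem yields $\beta_{i}(I)=\beta_{i}(I^{*})$ for every $i\ge 0$. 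The remaining equality $\beta_{i}(I^{*})=\beta_{i}(\Gin(I))$ follows from Theorem \ref{HH} applied once more to the componentwise linear ideal $I^{*}$.

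Finally, (ii) $\Rightarrow$ (i) is immediate by setting $i=0$: $\mu(I)=\beta_{0}(I)=\beta_{0}(\Gin(I))=\mu(\Gin(I))$. The substantive work has already been carried out inside Theorem \ref{M}; the only delicate bookkeeping here is the identification $gr_{\mathbb{M}}(I)=I^{*}$ for the filtration $\mathbb{M}=\{I\cap\n^{p}\}$, recorded in Remark \ref{l}. Consequently no real obstacle remains, and the corollary is essentially the synthesis of the main theorem of the paper on the local side with the Aramova--Herzog--Hibi characterization on the graded side.
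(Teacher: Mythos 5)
Your proof is correct and follows essentially the same route the paper sketches (the paper only states that the corollary follows from Theorem \ref{M}, Theorem \ref{HH}, and the chain $\mu(I)\le\mu(I^{*})\le\mu(\Gin(I))$). Your expansion into the cycle (i) $\Rightarrow$ (iii) $\Rightarrow$ (ii) $\Rightarrow$ (i), with Theorem \ref{HH} applied to $J=I^{*}$ and Theorem \ref{M} applied to $M=I$ with the filtration $\{I\cap\n^{p}\}$, is exactly the intended synthesis.
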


J.  Herzog  and T. Hibi ( \cite[Corollary 1.4.]{HH}) proved the corresponding result of Theorem \ref{HH}  for the lex-segment ideal associated to a homogeneous ideal of $P.$ The   result  is  very interesting  because in general it is easier to determine $Lex(I) $ (it is uniquely  determined by the Hilbert function) than  $Gin(I).$

\noindent Similarly, starting from the inequalities   $\mu(I) \le  \mu(I^*)  \le \mu(Gin(I)) \le  \mu(Lex(I)),  $  it is easy to deduce:

\begin{corollary} \label{LEX} Let $I$ be an  ideal of the regular local ring $(R, \n).$ The following facts are equivalent:

i) $\mu(I)= \mu(Lex(I)) $

ii) $\beta_i(I )= \beta_i(I^*) = \beta_i(Lex(I))$ for every $i \ge 0.$

iii) $\mu(I)= \mu(I^*) $ and  $ I^* $ is a Gotzmann ideal.
\end{corollary}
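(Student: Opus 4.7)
The plan is to imitate the argument given for Corollary \ref{I*}, but replacing the role of $Gin(I)$ by $Lex(I)$ and appealing to the Herzog--Hibi characterization of Gotzmann ideals in place of Theorem \ref{HH}. The skeleton is the chain of inequalities
$$ \mu(I) \le \mu(I^*) \le \mu(Gin(I)) \le \mu(Lex(I)), $$
where the first step is the general comparison between an ideal and its initial ideal with respect to an $\n$-stable filtration, the second is a standard deformation argument, and the third follows from $Gin(I^*)$ being componentwise linear together with the fact that the lex-segment is an upper bound among componentwise linear ideals with a given Hilbert function.

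For (i)$\Rightarrow$(ii), assume $\mu(I)=\mu(Lex(I))$. Then every inequality in the chain above is an equality, so in particular $\mu(I)=\mu(I^*)$ (that is, $I$ is minimally generated by an $\n$-standard basis) and $\mu(I^*)=\mu(Lex(I^*))$. The Herzog--Hibi result \cite[Corollary 1.4]{HH} asserts that the latter equality characterizes $I^*$ as a Gotzmann ideal and gives $\beta_i(I^*)=\beta_i(Lex(I^*))$ for all $i\ge 0$. Gotzmann ideals are componentwise linear (since $\mu(I^*)=\mu(Lex(I^*))$ forces $\mu(I^*)=\mu(Gin(I^*))$ via the chain, and we can invoke Theorem \ref{HH}), so Theorem \ref{M} applies to $M=I$ with the filtration $\mathbb{M}=\{I\cap \n^p\}$ and yields $\beta_i(I)=\beta_i(I^*)$ for every $i$. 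Combining both identities gives (ii).

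The implication (ii)$\Rightarrow$(i) is immediate from looking at the $0$-th Betti number. For (i)$\Leftrightarrow$(iii), notice that $\mu(Lex(I^*))=\mu(Lex(I))$ by definition, so $\mu(I)=\mu(Lex(I))$ combined with the chain forces $\mu(I)=\mu(I^*)$ and $\mu(I^*)=\mu(Lex(I^*))$; by \cite[Corollary 1.4]{HH} the latter is exactly the Gotzmann condition on $I^*$. Conversely, if $\mu(I)=\mu(I^*)$ and $I^*$ is Gotzmann, then $\mu(I^*)=\mu(Lex(I^*))=\mu(Lex(I))$ and hence $\mu(I)=\mu(Lex(I))$.

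The only delicate point is to make sure the Herzog--Hibi theorem in the graded case really delivers both the Betti-number equality and the Gotzmann characterization from the single numerical hypothesis $\mu(I^*)=\mu(Lex(I^*))$; once this is in hand, the local-to-graded transfer is entirely supplied by Theorem \ref{M}, so no new homological work is needed.
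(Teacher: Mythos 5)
Your proof is correct and follows essentially the same route the paper intends (the paper compresses the argument to "it is easy to deduce" from the chain $\mu(I)\le\mu(I^*)\le\mu(Gin(I))\le\mu(Lex(I))$ plus the Herzog--Hibi lex-segment result and Theorem~\ref{M}). Your filling in of the componentwise-linearity step via $\mu(I^*)=\mu(Gin(I^*))$ and Theorem~\ref{HH} is a clean way to verify the hypothesis of Theorem~\ref{M}, and the rest of the logic matches what the authors are implicitly invoking.
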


 \begin{example}  {\rm{ (1).  \ Let $(A, \m, k)$ be a stretched Cohen-Macaulay ring of embedding codimension $h.$   Sally in  \cite{Sa}  defined the stretched Cohen-Macaulay ring as the local rings which admit an Artinian reduction $ B$ with Hilbert function $H_B(i) \le 1 $ for $i \ge 2.$ For example this is the case if $A$ is Cohen-Macaulay of  multiplicity $\le h+2. $

If $A$ has maximal Cohen-Macaulay  type (i.e. $h$) and $gr_{\m}(A) $
is Cohen-Macaulay, then $A$ is of homogeneous type. We may assume
$A=R/I $ where $R$ is a regular local ring. By reducing the problem
to the Artinian reduction $B$, it is known (see \cite{Sa} and
\cite{EV}) that $\mu(I)  =  {{h+1}\choose 2}. $ In particular
$\mu(I)= \mu(Gin(I)), $ then   $B$ (hence $A$)  is of homogeneous
type by Corollary \ref{I*}. \vskip 2mm \noindent (2).   The local
ring $A= k[[t^9, t^{17}, t^{19}, t^{39}]]   $ is of homogeneous
type. In this case the defining ideal $$I=(x_2x_3-x_1^4,
x_2^5-x_1x_3^4, x_2x_4-x_1^2x_3^2, x_3^2x_4-x_1x_2^4,
x_3^3-x_1^2x_4, x_4^2-x_1^3x_2^3)$$ is minimally generated by a
standard base. Moreover $Gin(I)= (x_1^2, x_1x_2, x_2^2, x_1x_3^2,
x_2x_3^2, x_3^5), $ hence $\mu(I)=\mu(I^*)= \mu(Gin(I) =6 $ and we
may apply Corollary \ref{I*}. Notice that $A$ is not stretched
because $H_B(2)=3.$

}}
 \end{example}
\vskip 3mm
 A. Conca, J. Herzog and T. Hibi  proved that the Betti numbers of  an ideal $I$ of a regular    local ring $R  $ of dimension $n$   can be related   to another sequence of numbers, $\alpha_1(I), \alpha_2(I), \dots $ called the  {\it generic annihilator numbers} of $A=R/I. $ Assuming that the residue class field is infinite, regular system of parameters $y_1, \dots, y_n$ can be chosen such that for every $p=1, \dots n $ $$A_p := (y_1, \dots, y_{p-1})A :_A y_p/ (y_1, \dots, y_{p-1})A$$ is of finite length. Denoting by $$\alpha_p(A) := length A_p,  $$ in \cite[Corollary 1.2]{CHH} it was proved that $$\beta_i(A) \le \sum_{j=1}^{n-i+1} {{n-j}\choose {i-1} } \alpha_j(A).$$

If $A$ is a graded standard algebra, in \cite{CHH} it is proved that the equality holds  provided the homogeneous ideal $I$ is componentwise linear. In the local case we loose this characterization, but by  \cite[Theorem 1.5 and Remark 1.6]{CHH}, the equality holds if $I$ is Koszul or equivalently $gr_{\n}(I) $ has a linear resolution.  By  \cite[ Proposition 1.5]{HI}, for proving that a module is Koszul, it will be useful to introduce the {\it{ linearity  defect} } denoted by $ld.$


 \vskip 3mm
 As usual let  (${\bf{F.},  }\delta. $)  be a minimal $R$-free resolution of a module $M.$  For all integer $i$ we have
 $$gr_{\n}(F_i)(-i) = \oplus_{j\ge i} \n^{j-i} F_i/  \n^{j+1-i} F_i \simeq gr_{\n}(R)^{\beta_i(M)}(-i) $$
Following this  construction due to  D. Eisenbud,  G. Floystad and F. O. Schreyer in \cite{EFS}, the differential maps $\delta_i $ induces a bihomogeneous map :
 $$ \delta^{lin}_{i+1} : gr_{\n}(F_{i+1})(-i-1)  \to gr_{\n}(F_i)(-i) $$
 which can be described by matrices of {\it{linear forms}}. Precisely the matrices, say ${\mathcal{M}}_{i+1}^{lin}, $  are obtained by replacing in   ${\mathcal{M}}_{i+1} $  all entries of valuation $> 1$ by $0 $ and by   replacing all the  entries of valuation one by their  initial forms with respect to the $\n$-adic filtration.  The minimality of  (${\bf{F.} }, \delta.$)  ensures that the maps $\{ \delta^{lin}_{i} \}$  are well-defined  and  form a complex homomorphism  denoted by $lin^R({\bf{F.}}) $ which is not necessarily exact. It is called the {\it {linear part of the resolution. }}   For the construction of this complex and related results see  \cite{EFS},  as well \cite{HI, R}.
   T. R\"oemer introduced a measure for the lack of the exactness and he defined
  $$ ld( M) := inf \{j : H_i(lin^R({\bf{F.}})) =0  \text{ for }  i \ge j+1 \} $$
 In particular $lpd(M)=0 $ if and only if    $lin^R({\bf{F.}}) $ is exact. Following \cite{HI},  $M$ is Koszul if and only if $ld(M)=0. $ R\"oemer proved in \cite[Theorem 3.2.8]{R} that,  for graded modules,  having $ld(M)=0 $ is equivalent to be    componentwise linear and hence to be Koszul.
 Herzog and Iyengar proved in \cite[Proposition 1.5.]{HI}  that  to be {\it {Koszul}}  is equivalent to the fact that $lin^R({\bf{F.}}) $ is the minimal free resolution of $gr_{\n}(M)= \oplus_j \n^j M/\n^{j+1} M.   $ In particular  this is the case if and only if  $gr_{\n}(M)  $ has  a linear resolution as a  $gr_{\n}(R)$-module.

 \vskip 2mm
 The following theorem says that under the assumptions  of Theorem \ref{M}, the module $M$ is Koszul, hence $M$ is of homogeneous type (w.r.t. the $\n$-adic filtration).

 \begin{theorem} \label{lpd} Let $M$ be a finitely generated filtered module over a  regular local ring $(R, \n) $ such  that  $\mu(M) = \mu(gr_{\mathbb{M}} (M)).  $ 
 
\noindent If $ gr_{\mathbb{M}} (M) $ is a componentwise linear  $P$-module (equivalently a Koszul graded module), 
then $M$ is Koszul. 
 \end{theorem}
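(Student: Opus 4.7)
The plan is to exhibit the minimal $R$-free resolution of $M$ produced by the lifting procedure of Theorem \ref{main} and then show directly that its $\n$-adic linear part $lin^R({\bf F.})$ is acyclic in positive homological degrees; by the Herzog-Iyengar characterization recalled just before the statement, this is precisely the condition $ld(M)=0$ defining Koszulity of $M$.

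By Theorem \ref{M}, $M$ is of homogeneous type with respect to $\mathbb{M}$, so the inductive construction of Theorem \ref{main} applied to the minimal graded $P$-free resolution $({\bf G.},d.)$ of $M^{*}:=gr_{\mathbb{M}}(M)$ produces a \emph{minimal} $R$-free resolution $({\bf F.},\delta.)$ of $M$ equipped with a special filtration $\mathbb{F}$ on ${\bf F.}$ satisfying $gr_{\mathbb{F}}({\bf F.})=({\bf G.},d.)$. Because $M^{*}$ is componentwise linear, Proposition \ref{resolution} forces each $F_{s}$ to have the shape $\bigoplus_{j=1}^{m} R^{\beta_{s,i_{j}+s}}(-(i_{j}+s))$, where $i_{1}<i_{2}<\dots<i_{m}$ are the generator degrees of $M^{*}$, so every matrix $\mathcal{M}_{s+1}$ carries the natural block decomposition $(M_{i_{p}i_{q},s+1})_{1\le p,q\le m}$ already used in the proof of Theorem \ref{M}.

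The crux is a block analysis of $\mathcal{M}_{s+1}^{lin}$. The filtered-morphism condition on $\delta_{s+1}$, together with minimality, forces an entry of $\mathcal{M}_{s+1}$ in block $(i_{p},i_{q})$ to have $\n$-adic valuation at least $\max\{1,\,i_{q}-i_{p}+1\}$. Hence up-diagonal blocks $(p<q)$ have all entries in $\n^{2}$ and vanish in $\mathcal{M}_{s+1}^{lin}$; diagonal blocks $(p=q)$ contribute their initial linear forms, which coincide with the diagonal blocks of $\mathcal{M}_{s+1}^{*}$; under-diagonal blocks $(p>q)$ may contribute arbitrarily. Consequently $\mathcal{M}_{s+1}^{lin}$ is block \emph{lower} triangular, and by Remark \ref{property} its diagonal blocks realize the $(s+1)$-th differentials in the minimal linear free resolutions of the componentwise linear pieces $(N_{j})_{<i_{j}>}$, each of which is exact in positive homological degrees.

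Finally, I would filter the complex $lin^{R}({\bf F.})$ by the subcomplexes $\Psi^{t}F_{s}:=\bigoplus_{j\ge t} R^{\beta_{s,i_{j}+s}}(-(i_{j}+s))$; the lower-triangular shape guarantees $\delta_{s+1}^{lin}(\Psi^{t}F_{s+1})\subseteq \Psi^{t}F_{s}$, so this is a finite filtration by subcomplexes whose associated graded is the direct sum of the diagonal sub-complexes just identified. Each summand being acyclic in positive degree, a standard finite-filtration / spectral-sequence argument gives $H_{i}(lin^{R}({\bf F.}))=0$ for every $i\ge 1$, hence $ld(M)=0$, i.e.\ $M$ is Koszul. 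The main obstacle is the valuation bookkeeping in the block analysis: one must use the strict ordering $i_{1}<\dots<i_{m}$ and the filtered-morphism property of $\delta_{s+1}$ to guarantee that the up-diagonal entries all lie in $\n^{2}$, which is exactly what makes $\mathcal{M}_{s+1}^{lin}$ lower triangular without any ad hoc adjustment of the liftings, so that the acyclicity reduces cleanly to the componentwise linear situation handled by Remark \ref{property}.
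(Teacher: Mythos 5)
Your proof is correct and follows essentially the same route as the paper: reduce to Theorem \ref{M}, establish via the valuation bounds that $\mathcal{M}_{s+1}^{lin}$ is block lower triangular with diagonal blocks equal to those of $lin^P({\bf G.})$, and conclude acyclicity from the exactness of these diagonal linear strands. The only difference is presentational: the paper deduces $\mathrm{Ker}\,\delta_j^{lin}\subseteq\mathrm{Im}\,\delta_{j+1}^{lin}$ by explicitly peeling off the $i_1$-block, then the $i_2$-block, and so on, whereas you package the same iteration as a finite filtration by the subcomplexes $\Psi^t$ together with the standard filtered-complex (spectral sequence) argument, which is an equivalent reformulation.
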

 \begin{proof}  We prove $ld(M)=0$.
    Let $M^*=gr_\mathbb{M}(M)$ and   (${\bf{G.},  } d. $)  be a minimal $P$-free resolution of $M^* $ (where $P=gr_{\n}(R)$).
    Denote by $0 <i_1< \dots < i_m $ the degrees of the elements of  a minimal  set of generators of $M^*.$

 \noindent From ${\bf{G.}} $  we can build   $lin^P({\bf{G.}}) $ as defined before.  Since $M^*$ is a componentwise linear module, $lin^P({\bf{G.}}) $
 is exact ($ld(M^*)=0$). Moreover, we can split  $ lin^R({\bf{G.}}) $  as $\oplus_{r=1}^m   lin^P({\bf{G_{i_r}.}}) $ where $ {lin^P(\bf{G_{i_r}.}})  $ is the linear part of the resolution of   the submodule of $M^*$ generated by  the minimal set of generators of $M^*$ of degree $i_r.$

 In fact,  by the construction, the matrices  $ lin (\mathcal{M}_{j}^*)  $  associated  to $$ d_j^{ lin}: P^{\beta_j(M^*)} (-j ) \to P^{\beta_{j-1}(M^*)} (-j+1) $$     are obtained from  $ \mathcal{M}_{j}^*  $ by replacing all the entries of degree $>1 $ by $0.$ Then,
 by Remark \ref{matrici},  the matrices  $ lin (\mathcal{M}_{j}^*),   $     $1 \le j \le pd(M) $ will present  the following shape:

 \[
    \xymatrix@C=   1.5ex@R=0.25ex{
 & &                i_1+j        & &                    & &             i_{m}+j   & && \\
 & & \hspace{7ex}\drop\frm{^\}} &  & & & \hspace{7ex}\drop\frm{^\}} &  &  \\
 & \ar@{-}@/_1pc/[dddddd] & & \ar@{.}[dddddd]  &    &   \ar@{.}[dddddd]  &  &  \ar@{-}@/^1pc/[dddddd]   &  \\
i_1+j-1 \,\bigg\{ & &     M_{i_1 i_1 j}^*           & &     0       & &            0 &  \\
 & \ar@{.}[rrrrrr] &&&&&&& \\
                      & &        0               &&            \ddots                &&         0            && \\
  & \ar@{.}[rrrrrr] &&&&&&& \\
 i_m +j-1  \,\bigg\{ & &               0        & &               0            & &      M_{i_m  i_m  j}^*  & & \\
 &&&&&&&&
}
\]

We remark that all the   blocks on the diagonal  are  the same as in  $ \mathcal{M}_{j}^*  $ (they have linear entries) and the upper diagonal blocks are replaced by $0$ because the corresponding  entries have degree at least two.

Also, by  Theorem \ref{main},  from ${\bf{G.}} $  we can build  the
minimal $R$-free resolution  (${\bf{F.} }, \delta. $)  of $M.  $  By
Theorem \ref{M},  $\beta_j(M)= \beta_j(M^*). $ In  its  turn,  from
(${\bf{F.} },\delta. $)  we can build  $lin^R({\bf{F.}}). $  We have
to prove that
 $$  H_i (lin^P({\bf{G.}}))=0  \ \ \Longrightarrow   \   H_i (lin^R({\bf{F.}}))=0   {\text {\ \ for \ every   }} i \ge 1. $$ We remark that $G_j^{lin} = F_j^{lin} \simeq P^{\beta_j(M)} (-j ) $ where $\beta_j(M)= \beta_j(M^*) = \sum_{r=1}^m \beta_{j, i_r+j} $ and, without loss of generality, we denote by $ e^j_{s,i_r} $ for  $r=1, \dots, m, $   $s=1, \dots, \beta_{j, i_r+j} $ a basis  of both $ G_j^{lin} $ and $ F_j^{lin}.$ The matrices  $ lin (\mathcal{M}_{j})  $  associated  to $ \delta_j^{ lin} : P^{\beta_j(M )} (-j ) \to P^{\beta_{j-1}(M)} (-j+1) $$   $ with respect to these  bases  are obtained from  $ \mathcal{M}_{j}  $  and they have the following shape:

   \[
    \xymatrix@C=   1.5ex@R=0.25ex{
 & &                i_1+j        & &                    & &             i_{m}+j   & && \\
 & & \hspace{7ex}\drop\frm{^\}} &  & & & \hspace{7ex}\drop\frm{^\}} &  &  \\
 & \ar@{-}@/_1pc/[dddddd] & & \ar@{.}[dddddd]  &    &   \ar@{.}[dddddd]  &  &  \ar@{-}@/^1pc/[dddddd]   &  \\
i_1+j-1 \,\bigg\{ & &     M_{i_1 i_1 j}^*           & &     0       & &            0 &  \\
 & \ar@{.}[rrrrrr] &&&&&&& \\
                      & &        ^*               &&            \ddots                &&         0            && \\
  & \ar@{.}[rrrrrr] &&&&&&& \\
 i_m +j-1  \,\bigg\{ & &              ^*     & &            ^*         & &      M_{i_m  i_m  j}^*  & & \\
 &&&&&&&&
}
\]
where   the ``diagonal blocks" coincide with those of $ lin   (\mathcal{M}_{j}^*)  $ (the non zero entries have valuation $1$) and $*$ denotes $0$  or linear forms.
 Since we always have $ Ker\ \delta_j^{lin} \supseteq Im\  \delta_{j+1}^{lin},  $ we prove  $ Ker\  \delta_j^{lin} \subseteq Im\  \delta_{j+1}^{lin}  $
 for every $j=1, \dots, pd(M).$

 Fixed an integer $r \in \{1, \dots, m\}, $  denote by $N^j_{i_r} $ the submodule of $ G_j^{lin} $ generated by $  e^j_{1, i_r}, \dots   e^j_{\beta_{j, i_r+j}, i_r}. $  Since $ {lin^P(\bf{G_{i_r} }})  $ is exact,  then
 $ Ker\  (d_j^{\ lin}|N^j_{i_r}) = Im\  (d_{j+1}^{\ lin}|N^{j+1}_{i_r}). $
Let    $$x = \sum_{r=1}^m \sum_{s=1}^{\beta_{j, i_r+j}} \lambda_{s  r} e^j_{s, i_r}  \in  Ker\  \delta_j^{lin} $$ with $\lambda_{ s  r }  \in  P. $ Because $\sum_{s=1}^{\beta_{j, i_1+j}} \lambda_{ s 1} e^j_{s, i_1} \in Ker\ d_j^{\ lin} \cap N^j_{i_1} = Im\ (d_{j+1}^{\ lin}| N^{j+1}_{i_1}), $  we get    $\sum_{s=1}^{\beta_{j, i_1+j}} \lambda_{ s 1} e^j_{s, i_1}= d_{j+1}^{lin} (\alpha_1) $ with $\alpha_1 \in     N^{j+1}_{i_1}. $
Set  $$x_1:= x -  \delta_{j+1}^{lin} (\alpha_1). $$ Notice that $ x_1 \in  Ker\  \delta_j^{lin}$ because  $x  \in  Ker\  \delta_j^{lin}$ and $    \delta_j^{lin}\circ    \delta_{j+1}^{lin}=0.$  It is easy to see that  $x_1 \in < e^j_{s, i_r} > $ with $r  \ge 2.$ In fact $\delta_{j+1} (\alpha_1)- d_{j+1}(\alpha_1) \in  P^{\beta_j(M)} (-j )/N^j_{i_1}$ because the blocks  on the diagonal of $lin^R({\bf{F.}}) $ and $lin^P({\bf{G.}}) $ coincide.

Hence  $ x_1 = \sum_{r=2}^m \sum_{s=1}^{\beta_{j, i_r+j}} \lambda_{s  r}'  e^j_{s, i_r}  \in  Ker\  \delta_j^{lin} $ with $\lambda_{ s  r }'  \in  P  $ and $ \sum_{s=1}^{\beta_{j, i_2+j}} \lambda_{s  r}'  e^j_{s, i_r} \in
 Ker\ d_j^{\ lin} \cap N^j_{i_2} = Im\ (d_{j+1}^{\ lin}| N^{j+1}_{i_2}). $  We can repeat the same procedure
 and finally we find $\alpha_r \in    N^{j+1}_{i_r}, $  $r=1, \dots, m-1, $ such that
 $$ x_m:= x- \delta_{j+1}^{\ lin}(\sum_{i=1}^{m-1}\alpha_i) \in  Ker\ d_j^{\ lin} \cap N^j_{i_m} = Im\ (d_{j+1}^{\ lin}| N^{j+1}_{i_m})=   Im\ (\delta_{j+1}^{\ lin}| N^{j+1}_{i_m}).$$ Because  $x_m \in  Im\ (\delta_{j+1}^{\ lin}),  $ it follows that $x \in  Im\ (\delta_{j+1}^{\ lin}),  $ as required.

   \end{proof}

 As a consequence of Theorem \ref{lpd}  and of  \cite[Theorem 1.5 and Remark 1.6]{CHH},  we can prove the following result.
  \begin{corollary} \label{alfa} Let $I$ be an ideal of a regular local ring $R$ of dimension $n$ which satisfies one of the equivalent  conditions of Corollary \ref{I*} and let $A=R/I$. Then
    $$\beta_i(A) =  \sum_{j=1}^{n-i+1} {{n-j}\choose {i-1} } \alpha_j(A).$$

 \end{corollary}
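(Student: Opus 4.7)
The proof is essentially a direct combination of Theorem \ref{lpd} with the local version of the Conca--Herzog--Hibi equality. My plan is as follows.

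First, I would translate the hypothesis into the form needed by Theorem \ref{lpd}. Corollary \ref{I*} provides three equivalent conditions; the one convenient here is (iii), namely $\mu(I)=\mu(I^*)$ and $I^*$ is componentwise linear. Taking $M=I$ and equipping $I$ with the $\n$-stable filtration $\mathbb{M}=\{\n^p\cap I\}_{p\ge 0}$, Remark \ref{l} gives $gr_{\mathbb{M}}(I)=I^*$, so the two hypotheses of Theorem \ref{lpd} read $\mu(M)=\mu(gr_{\mathbb{M}}(M))$ and $gr_{\mathbb{M}}(M)$ componentwise linear. Both are satisfied.

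Second, I would invoke Theorem \ref{lpd} to conclude that $I$ is a Koszul $R$-module, i.e.\ the linearity defect $ld(I)=0$, which (by \cite[Proposition 1.5]{HI}) is equivalent to saying that $gr_{\n}(I)=\bigoplus_{j\ge 0}\n^{j}I/\n^{j+1}I$ admits a linear resolution over $gr_{\n}(R)$.

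Finally, I would quote \cite[Theorem 1.5 and Remark 1.6]{CHH}: whenever $I$ is Koszul (equivalently, $gr_{\n}(I)$ has a linear resolution), the upper bound
\[
\beta_i(A)\ \le\ \sum_{j=1}^{n-i+1}\binom{n-j}{i-1}\alpha_j(A)
\]
is actually an equality. Combining this with the Koszulness of $I$ established in the previous step delivers the desired identity.

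There is no serious obstacle here; the conceptual work was already carried out in Theorem \ref{lpd}, which upgrades the homogeneous-type conclusion of Theorem \ref{M} to Koszulness of $M$. The only thing to check carefully is the compatibility of the filtrations: Theorem \ref{lpd} uses an arbitrary $\n$-stable filtration $\mathbb{M}$ to deduce Koszulness with respect to the $\n$-adic filtration, so it is legitimate to feed in $\mathbb{M}=\{\n^p\cap I\}$ (which realises $gr_{\mathbb{M}}(I)=I^*$) and then apply the CHH equality, which is phrased in terms of the $\n$-adic associated graded module $gr_{\n}(I)$.
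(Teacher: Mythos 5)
Your proposal is correct and matches the paper's intended proof: the paper itself indicates the argument immediately before the corollary by noting that, in the local case, the CHH equality of \cite[Theorem 1.5 and Remark 1.6]{CHH} holds precisely when $I$ is Koszul, and Theorem \ref{lpd} (applied with $\mathbb{M}=\{\n^p\cap I\}$, so that $gr_{\mathbb{M}}(I)=I^*$, using condition (iii) of Corollary \ref{I*}) delivers Koszulness of $I$. You also correctly flag the one subtle point — that Theorem \ref{lpd} produces Koszulness with respect to the $\n$-adic filtration even though the hypotheses are checked on the intersection filtration — which is exactly the compatibility needed to invoke the CHH result.
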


\noindent  We remark that, under the   assumption of the above result,  we also have  $$\beta_i(A) =  \sum_{j=1}^{n-i+1} {{n-j}\choose {i-1} } \alpha_j(gr_{\m}(A)).$$
 \vskip 3mm
 We present now an unexpected  consequence on the theory of blowing-up algebras.
 If $I \subseteq \n^2 $ is a non zero ideal of a regular local ring $(R, \n),$ we let $A=R/I $ the local ring with maximal ideal $\m=\n/I. $ We denote by $S_A(\m) $ the symmetric algebra of $\m$ over $A.$  In this case it is known (see \cite[Corollary 2.2]{HRV}) that $$S_A(\m) = \oplus_{p \ge 0} \n^p/ I \n^{p-1}. $$  Even if $A$ is Cohen-Macaulay, the symmetric algebra has a strong tendency not to be Cohen-Macaulay.  M.E. Rossi (see \cite[Theorem 2.4 and Theorem 3.3]{Ro}) proved that  $S_A(\m) $ is Cohen-Macaulay if and only if $A$ is an abstract hypersurface ring. One reason is that the  Krull dimension of $S_A(\m),  $ compared with the one of $A$, can be very higher. C. Huneke and M.E. Rossi (see \cite{HR}) gave an explicit formula for the dimension of the symmetric algebra. Applied to the above setting, we  get

 \begin{equation} \label{dimension}\dim S_A(\m)= \dim R.
 \end{equation}    It follows from a result of the same paper \cite{HR} that
 $$ \depth ~ S_A(\m) \le \dim ~A +1.$$
In \cite[Theorem 2.13]{HRV}, J. Herzog, M.E. Rossi and G. Valla  proved   that $ \depth~  S_A(\m) $  (with respect to homogeneous irrelevant maximal ideal) is strictly related to the  depth  of $gr_{\n}(I).$
As a consequence, by using this connection and  the results of this paper, we prove the following theorem.

 \begin{theorem} \label{sym}
Let  $I \subseteq \n^2 $ be an ideal of a regular local ring $(R, \n),$ we let $A=R/I $ the local ring  with maximal ideal $\m=\n/I. $  Assume that $ I$ satisfies one of the equivalent  conditions of Corollary \ref{I*}. If $\depth ~ A>0, $ then
 $$\depth ~S_A(\m) \ge \depth~ A +1.$$
 If $A$ is Cohen-Macaulay, then $ \depth~S_A(\m) = \dim~ A +1.$
 \end{theorem}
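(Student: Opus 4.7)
My plan is to reduce the theorem to a depth computation for $gr_{\n}(I)$ via Theorem \ref{lpd}, and then transfer the conclusion to $S_A(\m)$ by invoking \cite[Theorem 2.13]{HRV}.

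By hypothesis $I$ satisfies the equivalent conditions of Corollary \ref{I*}, so $\mu(I) = \mu(I^*)$ and $I^*$ is componentwise linear. Taking the filtration $\mathbb{M} = \{I \cap \n^p\}$, for which $gr_{\mathbb{M}}(I) = I^*$, Theorem \ref{lpd} applied with $M = I$ yields that $I$ is a Koszul $R$-module. Hence $gr_{\n}(I)$ has a linear resolution as a $P = gr_{\n}(R)$-module, and $I$ is of homogeneous type with respect to the $\n$-adic filtration, so in particular $pd_{R}(I) = pd_{P}(gr_{\n}(I))$.

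A standard Auslander--Buchsbaum argument then computes $\depth_{P} gr_{\n}(I)$. From the short exact sequence $0 \to I \to R \to A \to 0$, using that $I \neq 0$ in the domain $R$ (so $\depth A < n := \dim R$), one obtains $\depth_{R} I = \depth A + 1$. Since $\dim P = n$, Auslander--Buchsbaum over $P$ yields
$$\depth_{P} gr_{\n}(I) \ = \ n - pd_{P}(gr_{\n}(I)) \ = \ n - pd_{R}(I) \ = \ \depth_{R} I \ = \ \depth A + 1.$$

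Finally, \cite[Theorem 2.13]{HRV} controls the depth of $S_A(\m)$ (with respect to the homogeneous irrelevant maximal ideal) in terms of $\depth gr_{\n}(I)$, via the canonical embedding $gr_{\n}(I)(-2) \hookrightarrow S_A(\m)$; under the hypothesis $\depth A > 0$, this produces $\depth S_A(\m) \geq \depth_{P} gr_{\n}(I) = \depth A + 1$, which is the first assertion. If $A$ is Cohen--Macaulay, then $\depth A = \dim A$ and so $\depth S_A(\m) \geq \dim A + 1$; combined with the a priori upper bound $\depth S_A(\m) \leq \dim A + 1$ recalled just before the theorem, this gives the equality $\depth S_A(\m) = \dim A + 1$. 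I expect the main technical point to lie in the precise invocation of \cite[Theorem 2.13]{HRV}: that result controls $\depth S_A(\m)$ via an exact sequence involving both $gr_{\n}(I)(-2)$ and its cokernel, so one must verify that the assumption $\depth A > 0$ suffices to rule out any loss of depth coming from the cokernel term.
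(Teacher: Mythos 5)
Your proposal is correct and follows essentially the same route as the paper's proof: invoke Theorem~\ref{lpd} to see $I$ is Koszul (hence of homogeneous type for the $\n$-adic filtration), compute $\depth\ gr_{\n}(I)=\depth\ I=\depth\ A+1$, and transfer this to $S_A(\m)$ via \cite{HRV}. The only point you flag as uncertain (whether $\depth\ A>0$ lets one pass from the depth of $gr_\n(I)$ to that of $S_A(\m)$ without loss from the cokernel) is precisely what the paper handles by citing both \cite[Theorem 2.13 c)]{HRV} and \cite[Corollary 4.14]{HRV}.
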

 \begin{proof} By Theorem  \ref{lpd} we know that   $I$ is of homogeneous type. It follows that  $\depth~ gr_{\n}(I) = \depth ~I =  \depth~ A +1 $ and the result follows by \cite[Theorem 2.13 c) and  Corollary 4.14]{HRV}.
 \end{proof}

\noindent  We remark that, if $\depth ~A=0, $ then $\depth~ S_A(\m) =0 $ from \cite[Proposition 2.3 d)]{HRV}.
 \\

Notice that Theorem \ref{sym}  extends and reproves  \cite[Theorem
3.9]{HRR} which was showed   in the homogeneous  context.  Moreover,
Theorem \ref{sym} and equality (\ref{dimension}) show that if $A$ is
Cohen-Macaulay and the equivalent conditions of Corollary \ref{I*}
hold, then $ S_A(\m) $ is Cohen-Macaulay if and only if $ \dim~ A=
\dim R - 1, $ which means $A$ is a hypersurface ring. The result
recovers, in a particular case, a more general result proved by M.E.
Rossi  in \cite[Theorem 3.3]{Ro}.

\end{document}